\documentclass[11pt,a4paper]{article}
\usepackage[]{geometry}
\usepackage[mathcal]{euscript}
\RequirePackage[OT1]{fontenc}
\RequirePackage[numbers]{natbib}
\RequirePackage[colorlinks,citecolor=blue,urlcolor=blue]{hyperref}
\usepackage{bm,url}                     
\usepackage{amsfonts}
\usepackage{amsthm}
\usepackage{amssymb}
\usepackage{graphicx}               
\usepackage{natbib}                 
\usepackage{colortbl}               
\usepackage{booktabs}
\usepackage[normalem]{ulem}
\usepackage{todonotes} 

\usepackage{amssymb}

\usepackage[latin1]{inputenc}
\usepackage{graphicx,color}
\pagestyle{plain}

\newtheorem{definition}{Definition}
\newtheorem{remark}{Remark}
\newtheorem{theorem}{Theorem}
\newtheorem{lemma}{Lemma}
\newtheorem{proposition}{Proposition}
\newtheorem{corollary}{Corollary}
\newcommand{\R}[1]{\mathbb{R}^{#1}}
\usepackage{amsmath}
\usepackage{amsfonts}
\usepackage{amssymb}
\usepackage{mathtools}
\usepackage{graphicx}
\usepackage{array}
\usepackage{multirow}
\usepackage{fancyhdr}
\usepackage{makeidx}
\usepackage{hyperref}
\newcommand{\V}{\mathbb{V}}

\newcommand{\eps}{\varepsilon}

\newcommand{\ind}{\mathbb{I}}
\newcommand{\pr}{\mathbb{P}}

\newcommand{\deb}{\stackrel{\mathcal{L}}{\longrightarrow}}
\newcommand{\Vor}{\mbox{Vor}}
\newcommand{\diam}{\mbox{diam}}
\newcommand{\E}{\mathbb{E}}


\begin{document}

\begin{center}
	\Large \bf  A Generalization of the maximal-spacings in several dimensions and a convexity test.
\end{center}

\begin{center}
	\bf Catherine Aaron$^a$, Alejandro Cholaquidis$^b$, Ricardo Fraiman$^b$ 
\end{center}

\begin{center}
		$^a$  Universit\'e Blaise-Pascal Clermont II.
	$^b$ Universidad de la Rep\'ublica.\\
\end{center}

\begin{abstract}
The notion of maximal-spacing in several dimensions was introduced and studied by
Deheuvels (1983) for data uniformly distributed on the unit
cube. Later on, Janson (1987) extended the results to data uniformly distributed on any bounded
 set, and obtained a very fine result, namely, he derived
the asymptotic distribution of different maximal-spacings notions.
 These
results have been very useful in many statistical applications.

We extend Janson's results to the case where the data are generated
from a H\"older continuous density that is bounded from below and whose support is bounded.
As an application, we develop a convexity test for the support of a
distribution.
\end{abstract}

\noindent\it Key words\rm:maximal spacing, convexity test, non-parametric density estimation.

\section{Introduction}

The notion of spacings, which for one
dimensional data are just the differences between two consecutive
order statistics, have been extensively studied in the one
dimensional setting; see, e.g., the review papers  \cite{pyke65,pyke72}. Many important applications to testing and estimation problems have been derived from
the study of the asymptotic behaviour of the spacings. 
Applications to testing problems date
back to \cite{proschanpyke67}, who address the asymptotic
theory of a class of tests for Increasing Failure Rate. For
estimation problems, \cite{ranneby84} propose the maximum spacing
estimation method to estimate the
parameters of a univariate statistical model. 

In the multidimensional case, several different notions of maximal-spacing have been proposed.
Most of them are based on the nearest-neighbors balls (see for instance \cite{leo:08} or \cite{bar:09}) or on the Voronoi tessellation \cite{mill03},
(a comparison can be found in \cite{ran:05}), but they do not capture the key idea of `largest set missing the observations'. 
In contrast, this is the case with the different and global notion 
proposed in \cite{deheuvels83}, and generalized in \cite{jan87}. 

In \cite{deheuvels83}, the notion of
maximal-spacing is defined and studied for iid data uniformly distributed in $[0,1]^d$ 
as the maximal length $a$ of a cube $C=\prod[x_i,x_i+a]$, included in $[0,1]^d$ 
that does not contain any of the observations.
This notion has been extended in \cite{jan87}, in which the uniformity assumption remains
but the support of the distribution is no longer assumed to be $[0,1]^d$ but may be any compactum $S$.
Moreover, $C$ is allowed to be any 
compact and convex set. Finally, while in \cite{deheuvels83} only bounds are given, in \cite{jan87}
 the asymptotic distribution for the maximal spacing is provided.

The notion of maximal
multivariate spacing, and in particular Janson's result, has been
used to solve different statistical problems. In set estimation
(see, for instance, \cite{cf97} and \cite{crc:04} ), it is used to prove the optimality of the rates of convergence.\\

The aim of this paper is to extend Janson's result to H\"older continuous densities, 
and develop, using that extension, a test to decide whether the support is convex or not.
It is organized as follows: Section $2$ is devoted to the extension of Janson's results. A new definition 
(which includes Janson's as a particular case) is given and the associated theoretical results are presented. The proofs of these results  are given in Appendix A.
Section $3$ is dedicated to the problem of testing the convexity of the support. The corresponding  proofs are given in Appendix B. Just to mention some application of this test, let us recall that when dealing with support estimation, if the support is 
known to be convex, the convex hull of the observations provides a consistent and well studied (see for instance \cite{resul1},\cite{resul2} and \cite{walther2})
estimator of $S$ which does not require any smoothing parameter. 
Also, a convexity test can  be used to, a posteriori, select a tuning parameter. In \cite{ghl2014} a test for convexity is 
also proposed, and applied to choose the parameter of the ISOMAP (see \cite{tenen:00}) method for dimensionality reduction.
The convexity test based on Janson's extension allows us to provide an estimation of the $p$-value, whereas, in \cite{ghl2014}, it has to be
estimated via the Monte Carlo method.

\section{Main definitions and results}\label{uno}

We start by fixing some notation that will be used throughout the paper.

Given a set $S\subset \mathbb{R}^d$,
 we denote by $\partial S$, $\mathring{S}$, $\overline{S}$, $\diam(S)$, $|S|$, and $\mathcal{H}(S)$,
the boundary, interior, closure, diameter, Lebesgue measure, and convex hull of $S$, respectively. 
We denote by $\|\cdot\|$ and $\langle \cdot,\cdot \rangle$ the euclidean norm and the inner product respectively. We write $N(S, \eps)$ for the inner covering number of $S$ (i.e.: 
the minimum number of balls of radius $\eps$ and centred in $S$ required to cover $S$). Recall that if $S$ is
compact, there exists $C_S$ such that $N(S,\eps)\leq C_S \eps^{-d}$.
 We denote by $\mathcal{B}(x,\varepsilon)$ the closed ball in $\mathbb{R}^d$, of radius $\varepsilon$, centered at $x$.  We set
 $\omega_d=|\mathcal{B}(0,1)|$. 
 Given  $\lambda\in \mathbb{R}$, $A,C\subset \mathbb{R}^d$, we set $\lambda A=\{\lambda a: a\in A\}$,
 $A\oplus C=\{a+c:a\in A, c\in C\}$, and $A\ominus C=\{x:\{x\}\oplus C\subset A\}$. For the sake of simplicity, we use the notation $x+C$, instead of $\{x\}\oplus C$. If $\lambda\geq 0$, we set $A^\lambda=A\oplus \lambda\mathcal{B}(0,1)$ and $A^{-\lambda}=A\ominus \lambda \mathcal{B}(0,1)$.
Given  $A,C\subset{\mathbb R}^d$ two non-empty  compact sets,  the Hausdorff (or Pompeiu--Hausdorff) distance between them is given by 
$$d_H(A,C)=\max \bigg\{ \max_{a\in A}d(a,C),\ \max_{c\in C} d(c,A)\bigg\},$$
where  $d(a,C)=\inf\{\Vert a-c\Vert:\, c\in C\}$. 
\\

Let $A\subset \mathbb{R}^d$ be a compact convex set with $|A|=1$,
let $v$ be a vector of $\mathbb{R}^d$, and let $\alpha_A(v)$ be the constant defined in equation $(2.4)$ of \cite{jan87}:
\begin{equation} \label{alpha}
\alpha_A(v)= \frac{1}{d!}\int \dots \int \big|\mbox{Det}\big(n(y_i)\big)_{i=1}^d\big|d\omega(y_1)\dots d\omega(y_d),
\end{equation}
where $\omega$ denotes the $d-1$ dimensional Hausdorff measure, and for $y\in \partial A$, $n(y)$ denotes the exterior unit normal  vector to
 $A$ at $y$. The integral in \eqref{alpha} is over all $y_1,\dots,y_d\in \partial A$ such that $v$ is a linear combination of
 $n(y_1),\dots,n(y_d)$ with positive coefficients, and $\mbox{Det}(n(y_i))_{i=1}^d$ is the determinant of the vectors $n(y_i)$ in an orthonormal basis. 
Corollary 7.4 in \cite{jan86} proves that $\alpha_A(v)$ is  almost everywhere  independent of $v$, so that
there can be defined an $\alpha_A$ such that $\alpha_A(v)=\alpha_A$ almost everywhere.

If $A$ is the unit cube, then $\alpha_A=1$; while if $\mathcal{B}$ is the unit ball, then
\begin{equation*}
 \alpha_\mathcal{B}=\frac{1}{d!} \left( \frac{\sqrt{\pi}\Gamma\left( \frac{d}{2}+1\right)}{\Gamma\left(\frac{d+1}{2}\right)}\right)^{d-1}.
\end{equation*}

Lastly, let $U$ be a random variable such that $\pr(U\leq t)=\exp\big(-\exp(-t)\big)$.

\subsection{Janson's result and its extension}
Let $S \subset \mathbb R^d$ be a bounded set with  $|S|=1$ and $|\partial S|=0$.
 Let $\aleph_n=\{X_1,\dots,X_n\}$ be iid random vectors uniformly distributed on $S$, and $A$ a bounded convex set.
 In \cite{jan87},  the maximal-spacing is defined as
\begin{equation*}
\Delta^*(\aleph_n)=\sup \Big\{r: \ \exists x \text{ such that } x+rA\subset S\setminus \aleph_n \Big\}.
\end{equation*}

To generalize the results of \cite{jan87} to the non-uniform case,
we need to extend the definition of maximal-spacing. When the sample is drawn according to 
a probability measure $\pr_X$, we consider the probability measure of the largest set $\lambda A$ missing $\aleph_n$. 
If $|A|=1$, $\Delta^*(\aleph_n)^d$ is the Lebesgue measure of the largest set $x+r A\subset S\setminus \aleph_n$. When the sample is drawn from a non-uniform probability measure, is natural to use the same definition, replacing the Lebesgue measure by the true underling distribution $\pr_X$. If $\pr_X$ has continuous density $f$,  then $\pr_X(x+r A)\sim f(x) r^d$ for sufficiently small $r$, so one can define the maximal-spacing as the largest $r$ such that there exists $x$ with $x+\frac{r}{f(x)^{1/d}}A\subset S\setminus \aleph_n$.
\begin{definition} \label{def1}
Let $\aleph_n=\{X_1,\dots,X_n\}$ be an iid random sample of points in $\mathbb{R}^d$, 
 drawn according to a density $f$ with bounded support $S$. Let $A\subset \R{d}$ be a convex and compact set such that
 $\vert A\vert=1$ and its barycentre is the origin of $\mathbb{R}^d$. We  define
\begin{equation*}
\Delta(\aleph_n)=\sup \Big\{r: \ \exists x \text{ such that } x+\frac{r}{f(x)^{1/d}}A\subset S\setminus \aleph_n \Big\},
\end{equation*}
$$V(\aleph_n)=\Delta^d(\aleph_n),$$
and
$$U(\aleph_n)=n\Delta^d(\aleph_n)-\log(n)-(d-1)\log\big(\log(n)\big)-\log(\alpha_A).$$
\end{definition}

The following result can be found in \cite{jan87}.

\begin{theorem} \label{jan} Let $S\subset \R{d}$ be a bounded set such that $|S|=1$ and $|\partial S|=0$.
 Let $\aleph_n=\{X_1,\dots,X_n\}$  be iid random vectors uniformly distributed on $S$. Then, 
\begin{itemize}
	\item[i)] $$U(\aleph_n)\deb U\quad \text{when } n\rightarrow \infty,$$
	\item[ii)] $$\liminf_{n\rightarrow +\infty} \frac{nV(\aleph_n)-\log(n)}{\log(\log(n))}=d-1 \text{ a.s.,}$$
	\item[iii)] $$\limsup_{n\rightarrow +\infty} \frac{nV(\aleph_n)-\log(n)}{\log(\log(n))}=d+1 \text{ a.s.}$$
\end{itemize}
\end{theorem}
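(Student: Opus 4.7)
The plan is Poissonization followed by a method-of-moments argument. I would first replace $\aleph_n$ by a Poisson point process $\Pi_n$ on $S$ of intensity $n$; the de-Poissonization step is routine because $|\Pi_n|$ concentrates around $n$ and $\Delta^*$ is monotone under enlarging the sample. Define $r_n(u)$ by $n r_n(u)^d = \log n + (d-1)\log\log n + \log\alpha_A + u$, so that (i) reduces to proving $\pr(\Delta^*(\Pi_n)\le r_n(u))\to \exp(-e^{-u})$ as $n\to\infty$.

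Introduce the counting variable $N_n(u)$ equal to the number of \emph{locally maximal} empty translates $x+r_n(u)A\subset S$ (those for which no small perturbation of $(x,r)$ remains both empty and contained in $S$). Generically such a translate is pinned by $d$ contact points of $\Pi_n$ on its boundary whose outward normals positively span a prescribed direction. Parametrising the admissible translates by their contact $d$-tuple and changing variables from the boundary of $x+r_n A$ to $\partial A$, the Slivnyak--Mecke formula for $\Pi_n$ produces a first-moment integrand containing exactly $|\mbox{Det}(n(y_1),\dots,n(y_d))|/d!$, which is the integrand of $\alpha_A$ in \eqref{alpha}. Together with the void probability $\exp(-n r_n^d)$ and the rescaling factor $r_n^{d(d-1)}$ from $\partial A \to \partial(r_n A)$, this yields $\E[N_n(u)]\to e^{-u}$.

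Next I would bound the second factorial moment $\E[N_n(u)(N_n(u)-1)]$. Pairs of locally maximal empty translates require $2d$ joint contact points and a jointly empty union; for pairs whose centres are at distance $\gg r_n$ the void probabilities factorise and contribute $e^{-2u}$ in the limit, while for pairs with close centres the union has volume only of the order of a single translate and a direct local estimate shows the contribution is negligible. Convergence of factorial moments then gives $N_n(u) \to \mathrm{Poisson}(e^{-u})$ in distribution, and (i) follows from $\pr(N_n(u)=0)\to \exp(-e^{-u})$ after de-Poissonization.

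Statements (ii) and (iii) follow by Borel--Cantelli along the geometric subsequence $n_k=2^k$, using the tail bound $\pr(U(\aleph_n)>c\log\log n)\le (\log n)^{-c}$ coming from the first-moment estimate (which is summable along $n_k$ iff $c>1$) for the upper a.s.\ limit $d+1$, and a Paley--Zygmund argument from the matching second-moment bound for the lower a.s.\ limit $d-1$; monotonicity of $\Delta^*$ in $n$ interpolates between $n_k$ and $n_{k+1}$. The main obstacle is the first-moment computation yielding $\alpha_A$: identifying the parametrisation of locally maximal empty translates by their $d$ boundary contact points and extracting the Jacobian $|\mbox{Det}(n(y_i))|/d!$ is the delicate geometric input, precisely the content of Corollary~7.4 of \cite{jan86} invoked in the excerpt to guarantee that $\alpha_A(v)$ is a.e.\ constant in $v$.
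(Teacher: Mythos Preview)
The paper does not prove Theorem~\ref{jan} at all: it is quoted verbatim as a result from \cite{jan87} (``The following result can be found in \cite{jan87}''), and the paper's own contribution starts with Theorem~\ref{theocont}. So there is no proof in the paper against which to compare your proposal.

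That said, your sketch is a recognisable outline of the kind of argument Janson actually uses, which passes through his random-coverage machinery from \cite{jan86}: Poissonization, identification of the constant $\alpha_A$ via the determinant integral, and then Poisson approximation for the number of uncovered translates. Two points deserve caution if you were to flesh this out. First, your description of a locally maximal empty translate as ``generically pinned by $d$ contact points'' presupposes enough smoothness or strict convexity of $A$; for general convex $A$ (cubes, polytopes) the contact configurations are more varied and the Jacobian computation leading to $\alpha_A$ is precisely what Janson handles in \cite{jan86} rather than something one can read off directly. Second, for parts (ii) and (iii) your Borel--Cantelli scheme along $n_k=2^k$ gives only one-sided inequalities from the tail bounds you mention; obtaining the \emph{exact} constants $d-1$ and $d+1$ requires matching lower and upper bounds in both the $\liminf$ and $\limsup$ directions, which in Janson's proof comes from the two-sided inequality recorded here as Lemma~\ref{lem0}. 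A Paley--Zygmund argument alone will not pin down $d-1$ as an equality.
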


A rescaling extends these results to the case where $|S|\neq 1$.
\begin{corollary}  \label{jan2} Let $S\subset \R{d}$ be a bounded set such that $|\partial S|=0$ and $|S|>0$.
 Let  $\aleph_n=\{X_1,\dots,X_n\}$  be iid random vectors uniformly distributed on $S$. Then, 
\begin{itemize}
	\item[i)] $$U(\aleph_n)\deb U\quad \text{when } n\rightarrow \infty,$$
	\item[ii)] $$\liminf_{n\rightarrow +\infty} \frac{nV(\aleph_n)-\log(n)}{\log(\log(n))}=d-1 \text{ a.s.,}$$
	\item[iii)] $$\limsup_{n\rightarrow +\infty} \frac{nV(\aleph_n)-\log(n)}{\log(\log(n))}=d+1 \text{ a.s.}$$
\end{itemize}
\end{corollary}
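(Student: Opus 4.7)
The plan is to reduce Corollary \ref{jan2} to Theorem \ref{jan} by a simple rescaling argument, exploiting the fact that the normalization $1/f(x)^{1/d}$ appearing in Definition \ref{def1} has been engineered precisely so that $\Delta(\aleph_n)$ is invariant under dilation of the support. Concretely, I would set $c = |S| > 0$ and introduce the dilated set $\tilde{S} = c^{-1/d} S$, which satisfies $|\tilde{S}| = 1$ and $|\partial \tilde{S}| = 0$ (the latter because $x \mapsto c^{-1/d} x$ is a diffeomorphism, so it preserves Lebesgue-null sets). The rescaled sample $\tilde{\aleph}_n = \{c^{-1/d} X_1, \ldots, c^{-1/d} X_n\}$ is then iid uniform on $\tilde{S}$, and Theorem \ref{jan} applies to it directly.

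The heart of the proof is the identification $\Delta(\aleph_n) = \Delta(\tilde{\aleph}_n)$. Since $f(x) = c^{-1} \mathbb{I}_S(x)$, one has $r / f(x)^{1/d} = r\, c^{1/d}$ on $S$, so the defining condition $x + r\,c^{1/d} A \subset S \setminus \aleph_n$ becomes, after the change of variable $y = c^{-1/d} x$ and dividing by $c^{1/d}$, equivalent to $y + r A \subset \tilde{S} \setminus \tilde{\aleph}_n$. This identifies $\Delta(\aleph_n)$ with Janson's quantity $\Delta^*(\tilde{\aleph}_n)$; and since the density of $\tilde{\aleph}_n$ is identically $1$ on $\tilde{S}$, we also have $\Delta^*(\tilde{\aleph}_n) = \Delta(\tilde{\aleph}_n)$. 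Consequently $V(\aleph_n) = V(\tilde{\aleph}_n)$ and $U(\aleph_n) = U(\tilde{\aleph}_n)$, and the three statements (i)--(iii) transfer verbatim from $\tilde{\aleph}_n$ to $\aleph_n$.

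There is no real obstacle here: the corollary is essentially a sanity check that the $f(x)^{1/d}$ normalization in Definition \ref{def1} is the right one, in that it makes $\Delta$ invariant under uniform rescaling of the support. The only point requiring mild care is confirming that $\alpha_A$ and $A$ itself do not need to be modified under the rescaling (they do not, because $A$ is fixed once and for all, and the normalization by $f(x)^{1/d}$ absorbs the change of volume of $S$), so that the centering and scaling in the definition of $U$ match on both sides of the identity $U(\aleph_n) = U(\tilde{\aleph}_n)$.
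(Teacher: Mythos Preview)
Your proposal is correct and matches the paper's own approach: the paper simply states that ``a rescaling extends these results to the case where $|S|\neq 1$'' and later notes that ``the proof of Corollary~\ref{jan2} follows from a simple rescaling in the lemmas stated in \cite{jan87}.'' Your explicit computation showing that the $f(x)^{1/d}$ normalization in Definition~\ref{def1} makes $\Delta(\aleph_n)$, and hence $V(\aleph_n)$ and $U(\aleph_n)$, invariant under the dilation $x\mapsto |S|^{-1/d}x$ is exactly the intended argument.
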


Janson's result does not require any condition on the shape of the support $S$, 
while in our extension it will be required that the inside covering number of $\partial S$ is such that 
there exists $C_{\partial S}>0$ and $\kappa<d$ satisfying
$N(\partial S,\epsilon)\leq C_{\partial S} \epsilon^{-\kappa}$. Note that this is a very mild 
hypothesis: if $\partial S$ is smooth enough (for instance, a $\mathcal{C}^1$ $(d-1)-$dimensional manifold), it is 
fulfilled for $\kappa=d-1$. More generally, it also holds for any set $S$ with finite Minkowski content of the boundary, for $\kappa=d-1$ (see, for instance \cite{matilla}). With respect to the distribution of the sample, we require
that the density is H\"older continuous on $S$ (i.e. there exists $K_f$ and $\beta\in (0,1]$ such that for all $x,y\in S$, $|f(x)-f(y)|\leq K_f \|x-y\|^{\beta}$)
and bounded from below on it support, by a positive constant $f_0$.

This is stated in our main theorem, given below.

\begin{theorem}\label{theocont}

Let $\aleph_n=\{X_1,\dots,X_n\}$  be iid random vectors distributed according to a distribution 
$\pr_X$ whose density 
$f$ with respect to Lebesgue measure is H\"older continuous and bounded from below on its support $S$.  
Let us assume that $S$ is compact, and there exists $\kappa<d$ and $C_{\partial S}>0$ such that
$N(\partial S,\eps)\leq C_{\partial S} \eps^{-\kappa}$
Then, we have that
\begin{equation} \label{conv0}
U(\aleph_n)\deb U\quad \text{ when } n\rightarrow \infty,
\end{equation}
\begin{equation} \label{conv2}
\liminf_{n\rightarrow +\infty} \frac{nV(\aleph_n)-\log(n)}{\log(\log(n))}\geq  d-1 \text{ a.s.,}
\end{equation}
\begin{equation} \label{conv3}
\limsup_{n\rightarrow +\infty} \frac{nV(\aleph_n)-\log(n)}{\log(\log(n))}  \leq   d+1\text{ a.s.}
\end{equation}
\end{theorem}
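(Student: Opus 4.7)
The plan is to reduce the H\"older continuous case to Janson's uniform result by a localization argument. The definition of $\Delta(\aleph_n)$ is engineered precisely so that the probability mass of $x+\tfrac{r}{f(x)^{1/d}}A$ is asymptotically $r^d$ for small $r$, irrespective of $f(x)$; hence, locally, the non-uniform problem is equivalent to a uniform spacings problem with density one, to which Corollary \ref{jan2} applies after a linear change of variables.

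Concretely, I would first Poissonize: replace $\aleph_n$ with a Poisson process $\Pi_n$ of intensity $nf$ on $S$. Since $|\Pi_n|$ is concentrated around $n$, the three statements \eqref{conv0}--\eqref{conv3} for $\aleph_n$ and $\Pi_n$ are equivalent by standard de-Poissonization. Next, I would fix a mesh scale $\delta_n\to 0$ with $K_f\delta_n^\beta=o(1/\log\log n)$ (e.g.\ a small power of $n$), cover $S$ with boxes $Q_j^{(n)}$ of side $\delta_n$, and split them into \emph{interior} cells $Q_j^{(n)}\subset S$ and \emph{boundary} cells $Q_j^{(n)}\cap \partial S\neq\emptyset$. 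On each interior cell, the H\"older condition gives $f(x)\in [f_j-K_f\delta_n^\beta,f_j+K_f\delta_n^\beta]$ with $f_j:=f(z_j^{(n)})$. Sandwiching the Poisson intensity and applying the change of variables $y=f_j^{1/d}(x-z_j^{(n)})$ transforms the local picture into a uniform Poisson process of intensity $\approx n$ on a region of Lebesgue measure $f_j\delta_n^d$; Corollary \ref{jan2} then yields the appropriate local extremal limit, and the $f_j^{1/d}$ normalization built into $\Delta$ exactly cancels the density-dependent shift so that the centring constant $\log n +(d-1)\log\log n+\log\alpha_A$ is the same for every cell.

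The boundary cells are controlled by the hypothesis $N(\partial S,\eps)\leq C_{\partial S}\eps^{-\kappa}$ with $\kappa<d$. The number of boundary cells is $O(\delta_n^{-\kappa})$, their total Lebesgue measure is $O(\delta_n^{d-\kappa})$, and the expected number of observations they contain is $O(n\delta_n^{d-\kappa})=o(n)$. For the upper bound \eqref{conv3} and \eqref{conv0}, I would dominate the contribution of boundary cells by enclosing $S$ in a slightly larger convex body $\tilde S$ and comparing with the uniform spacing on $\tilde S$ (using that $f$ is bounded), so that the boundary contribution is $O_{a.s.}(\log n/n)$ and does not disturb the $\log\log n$ correction. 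For the lower bound \eqref{conv2}, one restricts attention to strictly interior cells, whose aggregate contributes a maximum of order $\log n/n$ with the correct $(d-1)\log\log n$ fluctuation, as in Janson; the discarded boundary mass $o(n\cdot \delta_n^{d-\kappa})/n$ is of smaller order and does not affect the almost-sure $\liminf$.

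The main obstacle will be the boundary analysis in Step~3: near an irregular $\partial S$ the shape of the set $x+(r/f(x)^{1/d})A$ intersected with $S$ may be very different from a full copy of $A$, which is what feeds the constant $\alpha_A$ in Janson's proof; one must argue that such boundary-truncated configurations, although possibly numerous, cannot produce an empty convex set of probability mass larger than the interior maximum, and this is where the quantitative bound $\kappa<d$ is used to beat the $(\log n)^{d-1}$ factor that controls the number of candidate empty cells. A secondary technical point is calibrating $\delta_n$ so that (i) the H\"older error $K_f\delta_n^\beta$ produces only an $o(\log\log n)$ shift in $U(\aleph_n)$, (ii) the number of interior cells is a polynomial in $n$ so that the union bound on local extremal events is affordable, and (iii) each cell at the local scale contains enough Poisson points for Janson's asymptotics to apply; picking $\delta_n$ as a small negative power of $n$ (depending on $\beta$, $d$ and $\kappa$) satisfies all three.
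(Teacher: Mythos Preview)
Your proposal follows essentially the same localization strategy as the paper: mesh $S$ into small cells, approximate $f$ by a constant on each cell, reduce to Janson's uniform result cell by cell, and control the boundary via the covering-number hypothesis $N(\partial S,\eps)\leq C_{\partial S}\eps^{-\kappa}$. The paper does not Poissonize; it works directly with the fixed-$n$ sample, conditions on the cell counts $N_i$, and uses a density-comparison lemma (Lemma~\ref{lem1}) to pass from $f$ restricted to a cell to the uniform density on that cell. Your Poissonization is a standard alternative that buys the same decoupling, so this is a variant rather than a different route.

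Two points in your sketch need to be fixed, however.

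\emph{Calibration of $\delta_n$.} The requirement $K_f\delta_n^\beta=o(1/\log\log n)$ is too weak for \eqref{conv0}. A relative error $\epsilon$ in the local density produces a multiplicative error $(1+O(\epsilon))$ in $V(\aleph_n)$, hence an additive error of order $nV(\aleph_n)\epsilon\approx(\log n)\epsilon$ in $U(\aleph_n)$. For the distributional limit you therefore need $\delta_n^\beta=o(1/\log n)$, not merely $o(1/\log\log n)$. The paper takes $\delta_n=(\log n/n)^{1/(3d)}$, which is comfortably small enough and also keeps the number of cells polynomial in $n$.

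\emph{Boundary control.} The suggestion to ``enclose $S$ in a slightly larger convex body $\tilde S$ and compare with the uniform spacing on $\tilde S$'' does not do what you need. Enlarging the support only enlarges $\Delta$, and the uniform-spacing bound on $\tilde S$ is of the \emph{same} order $\log n/n$ as the interior contribution, so it cannot show that the boundary contribution is negligible at the $\log\log n$ scale required for \eqref{conv0}. The paper's mechanism is different: it bounds the \emph{constrained} maximal spacing $\Delta(\aleph_n,G_n)$ in which the centre $x$ is forced to lie in a thin tube $G_n$ around $\partial S$ (and around the mesh faces). Because such a tube can be covered by $O(n^{1-a}(\log n)^b)$ balls of radius $n^{-1/d}$ for some $a>0$ --- this is precisely where $\kappa<d$ is used --- a union bound over these centres together with the lower bound $f\geq f_0$ gives $\pr\bigl(U(\aleph_n,G_n)\geq -\log\log n\bigr)\to 0$ (Lemma~\ref{lem02}), so the boundary never realizes the global maximum. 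You will need an analogue of this constrained-spacing estimate; the convex-enlargement idea does not substitute for it.
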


The proof is given in Appendix A.

\section{A new test for convexity}\label{testconvex}

\subsection{The semi-parametric case} \label{semiparam}
In this section we propose, using the concept of maximal-spacing defined in Section \ref{uno},
 a consistent hypothesis test based on an iid  sample $\{X_1,\dots,X_n\}$ uniformly distributed on a compact set $S$, 
 to decide whether $S$ is convex or not.

The main idea is the following: if $S$ is convex and the sample is uniformly distributed on $S$, then $\mathcal{H}(\aleph_n)$ is a good approximation to $S$
and $|\mathcal{H}(\aleph_n)|^{-1}\ind_{\mathcal{H}(\aleph_n)}$ is a good approximation of the uniform law. As a result,
\begin{equation*}
 \tilde{\Delta}(\aleph_n)=\sup\Big\{r: \exists x \text{ such that } x+r |\mathcal{H}(\aleph_n)|^{1/d}\mathcal{B}(0,1)\subset\mathcal{H}(\aleph_n)\setminus \aleph_n \Big\}
\end{equation*}
 is a plug-in estimator of the maximal spacing and should converge to $0$. 
On the other hand, if $S$ is not convex,  $\tilde{\Delta}(\aleph_n)$ is expected to converge to a positive constant (that depends on the shape of $S$).
In order to unify notation, let us first define the maximal inner radius.

\begin{definition} 
\label{maxespset} Let $S\subset \mathbb{R}^{d}$ be a bounded set satisfying $\mathring{S} \neq \emptyset$. 
We define the maximal inner radius of $S$ as
$$\mathcal{R}(S)=\sup\big\{r:\exists x\in S \text{ such that } \mathcal{B}(x,r)\subset S\big\}.$$

\end{definition}

\begin{remark} We have
 $\Delta(\aleph_n)=\mathcal{R}(S\setminus \aleph_n)\omega_d^{1/d}|S|^{1/d}$ and
$\tilde{\Delta}(\aleph_n)=\mathcal{R}(\mathcal{H}(\aleph_n)\setminus \aleph_n)\omega_d^{1/d}|\mathcal{H}(\aleph_n)|^{1/d}$.
	\end{remark}

When testing the convexity of the support using a test statistic based on $\tilde{\Delta}(\aleph_n)$, we only obtain, in general, an upper asymptotic bound
on the test level. However, if the boundary of the support is smooth enough, we have a converging estimation of the level.
The regularity condition is the following.\\

\textbf{Condition (P):} For all $x\in \partial S$ there exists a unique vector $\xi=\xi(x)$ with $\|\xi\|=1$, such that $\langle y,\xi\rangle\leq \langle x,\xi\rangle$ for all $y\in S$, and 
$$\|\xi(x)-\xi(y)\|\leq l\|x-y\|\quad \forall \ x,y\in \partial S,$$
where $l$ is a constant. 
We will denote by $\mathcal{C}_P$ the class of convex subsets that satisfy condition \textbf{(P)}.

The convexity test and its asymptotic behaviour is given in the following theorem.

\begin{theorem} \label{testconvteo} Let $S\subset \mathbb{R}^d$ be compact with non-empty interior. 
 Let $\aleph_n=\{X_1,\ldots,X_n\}$ be a set of iid random vectors uniformly distributed on $S$.
For the following decision problem,
\begin{equation} \label{testconv}
\begin{cases}
H_0: & \text{the set } S \text{ is convex}\\
H_1: & \text{the set } S \text{ is not convex},
\end{cases}
\end{equation}
the test based on  the statistic $\tilde{V}_n=|\mathcal{H}(\aleph_n)|\omega_d\mathcal{R}\big(\mathcal{H}(\aleph_n)\setminus \aleph_n\big)^d$ with critical region given by
$$RC=\big\{\tilde{V}_n> c_{n,\gamma}\big\},$$
where
$$c_{n,\gamma} = \frac{ 1}{n} \Big( -\log\big(-\log(1-\gamma)\big) +\log(n)+(d-1)\log\big(\log(n)\big)+\log(\alpha_\mathcal{B})\Big),$$
and $\alpha_\mathcal{B}$ is the constant defined in \eqref{alpha}, is asymptotically of level less than or equal to $\gamma$. 
Moreover, if $	S\in \mathcal{C}_P$, the asymptotic level is $\gamma$. If $S$ is not convex, the test has power one
for all sufficiently large $n$.
\end{theorem}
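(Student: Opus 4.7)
The plan is to rewrite the critical region as $\{nV_n^{\tilde{}}-\log n-(d-1)\log\log n-\log\alpha_{\mathcal B}>-\log(-\log(1-\gamma))\}$ and compare the normalised statistic with the Janson statistic $U(\aleph_n)$ associated with $A=\omega_d^{-1/d}\mathcal B(0,1)$, which under Corollary \ref{jan2} converges in distribution to the Gumbel variable $U$ satisfying $\pr(U>-\log(-\log(1-\gamma)))=\gamma$. For the level bound under $H_0$: when $S$ is convex, $\mathcal H(\aleph_n)\subset S$, so by the Remark
\[
\tilde V_n=|\mathcal H(\aleph_n)|\,\omega_d\,\mathcal R(\mathcal H(\aleph_n)\setminus\aleph_n)^d\leq |S|\,\omega_d\,\mathcal R(S\setminus\aleph_n)^d=V(\aleph_n),
\]
and stochastic domination together with Corollary \ref{jan2} yield $\limsup \pr(\tilde V_n>c_{n,\gamma})\leq\gamma$.

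For the exact asymptotic level under condition (P), the goal is to show $n(V(\aleph_n)-\tilde V_n)\to 0$ in probability, so that Slutsky's lemma upgrades the inequality to equality in distribution. Set $\delta_n=d_H(S,\mathcal H(\aleph_n))$. For convex $S$ the support-function inequality yields $S\ominus\delta_n\mathcal B(0,1)\subset\mathcal H(\aleph_n)$; hence if $\mathcal B(x_0,r)\subset S\setminus\aleph_n$ realises $\mathcal R(S\setminus\aleph_n)$, then for every $y\in\mathcal B(x_0,r-\delta_n)$ one has $d(y,\partial S)\geq\delta_n$, so the concentric ball of radius $r-\delta_n$ lies in $\mathcal H(\aleph_n)\setminus\aleph_n$, producing
\[
\mathcal R(S\setminus\aleph_n)-\delta_n\leq\mathcal R(\mathcal H(\aleph_n)\setminus\aleph_n)\leq\mathcal R(S\setminus\aleph_n).
\]
Expanding $V(\aleph_n)-\tilde V_n$ splits the error into a volume piece of order $n(|S|-|\mathcal H(\aleph_n)|)\mathcal R(S\setminus\aleph_n)^d$ and a radius piece of order $n|\mathcal H(\aleph_n)|\,d\,\mathcal R(S\setminus\aleph_n)^{d-1}\delta_n$. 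Theorem \ref{theocont} supplies $\mathcal R(S\setminus\aleph_n)=O((\log n/n)^{1/d})$ in probability, so it suffices to show under (P) that $|S|-|\mathcal H(\aleph_n)|$ and $\delta_n$ each decay at a polynomially faster rate than the maximal-spacing scale. Condition (P) provides the $C^{1,1}$-type regularity of $\partial S$ needed to invoke the classical convex-hull approximation estimates (rates $O(n^{-2/(d+1)})$ for the volume defect and $O((\log n/n)^{2/(d+1)})$ for $\delta_n$, with high probability), and $2/(d+1)>1/d$ for $d\geq 2$ gives the required gain.

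For the power-one claim, non-convexity combined with compactness of $S$ produces a point $c\in\mathcal H(S)\setminus S$ together with some $r_0>0$ such that $\mathcal B(c,r_0)\subset\mathcal H(S)\setminus S$. Since $\aleph_n$ is almost surely eventually dense in $S$, one has $d_H(\mathcal H(\aleph_n),\mathcal H(S))\to 0$ a.s., so $\mathcal B(c,r_0/2)\subset\mathcal H(\aleph_n)$ eventually; as this ball is disjoint from $S$ it contains no sample point, whence $\mathcal R(\mathcal H(\aleph_n)\setminus\aleph_n)\geq r_0/2$ and $\tilde V_n$ stays bounded below by a positive constant, while $c_{n,\gamma}\to 0$. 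The main technical obstacle is the second paragraph: squeezing quantitative control of both $\delta_n$ and the volume defect at rates sharp enough to beat the maximal-spacing scale inside the Slutsky reduction. This is precisely where condition (P) is indispensable; without it one retains only the one-sided domination $\tilde V_n\leq V(\aleph_n)$, which accounts for the fact that in the absence of boundary regularity the theorem provides only an upper bound on the asymptotic level.
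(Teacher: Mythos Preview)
Your proposal is correct and follows essentially the same route as the paper: domination $\tilde V_n\le V(\aleph_n)$ under $H_0$ via $\mathcal H(\aleph_n)\subset S$, the D\"umbgen--Walther $O((\log n/n)^{2/(d+1)})$ rates under condition~\textbf{(P)} fed into a Slutsky reduction for the exact level, and a fixed ball in $\mathcal H(S)\setminus S$ for the power. The only point the paper isolates that you take for granted is why $\mathcal H(S)\setminus S$ must contain an open ball (their Proposition~\ref{prop2}, which uses $\mathcal H(S)=\overline{\mathring{\mathcal H(S)}}$ to handle a witness point lying on $\partial\mathcal H(S)$); conversely, your direct inner-parallel-set argument $S\ominus\delta_n\mathcal B(0,1)\subset\mathcal H(\aleph_n)$ is a slightly cleaner substitute for their general Proposition~\ref{prop1}.
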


The proof of Theorem \ref{testconvteo} is given in Appendix B.

\subsection{The non-parametric case}\label{nparam}

We now assume that we have a sample $\aleph_n=\{X_1,\dots,X_n\}$ of iid random vectors in $\mathbb{R}^d$ drawn according to an unknown density $f$.
As in the semi-parametric case, the idea is to estimate the maximal-spacing and use this estimation as a test statistic. As before, $\mathcal{H}(\aleph_n)$ is proposed as an estimator of $S$. 
To ensure that the test proposed in Theorem \ref{testconvteo} allows determining whether the support is convex or not, the density estimator should have a non-conventional behaviour: it is expected to converge toward the unknown density when the support is convex, but not when the support 
is not convex. That is why we propose the following density estimator.
\begin{definition}\label{defestimdens}
 Let $\Vor(X_i)$ be the Voronoi cell of the point $X_i$ (i.e. 
 $\Vor(X_i)=\big\{x: \|x-X_i\|=\min_{y\in \aleph_n}\|x-y\|\big\}$).
 If $K$ is a kernel function (i.e. 
 $K\geq 0$, $\int K=1$ and $\int uK(u)du=0$) and $f_{n}(x)=\frac{1}{nh_n^d}\sum K((x-X_i)/h_n)$ denotes the usual kernel density estimator, we define
\begin{equation} \label{estvordens}
 \hat{f}_n(x)=\max_{i: x\in\Vor(X_i)}f_n(X_i)\ind_{x\in\mathcal{H}(\aleph_n)}.
 \end{equation}
\end{definition}
We propose to test the convexity using the following plug-in estimator of $\Delta(\aleph_n)$:
$$\hat{\delta}\big(\mathcal{H}(\aleph_n)\setminus \aleph_n\big)=\sup \bigg\{r: \ \exists x \text{ such that } x+\frac{r}{\hat{f}_n(x)^{1/d}}A\subset \mathcal{H}(\aleph_n)\setminus \aleph_n \bigg\},$$
with $A=\omega_d^{-1/d}\mathcal{B}(0,1)$,
and reject $H_0$ (the support is convex) if $\hat{\delta}(\mathcal{H}(\aleph_n)\setminus \aleph_n)$ is sufficiently large.

The proof of Theorem \ref{mainnonconvtheo} makes use of Theorem 2.3 in \cite{gg02}. In order to apply that result, we will introduce some technical hypotheses on the kernel function.

\begin{definition}
	Let $\mathcal{K}$ be the set of kernel functions $K(u)=\phi(p(u))$, where $p$ 
	is a polynomial and $\phi$ a is bounded real function of bounded variation, such that $c_K=\int \|u\|K(u)du<\infty$, $K\geq 0$ and there exists $r_K$ and $c'_K>0$ such that $K(x)\geq c'_K$ for all $x\in\mathcal{B}(0,r_K)$. 
\end{definition}
Note that, for example, the Gaussian and the uniform kernel are in $\mathcal{K}$. 
\begin{definition} \label{standard} A set $S$ is standard if there exist positive numbers $r_0,c_S$ such 
that $|\mathcal{B}(x,r)\cap S|\geq c_S\omega_dr^d$ for all $r\leq r_0$.
We write $\mathcal{C}$ for the class of compact convex sets with non-empty interior, and $\mathcal{A}$ for the class of all compact standard sets.
\end{definition}

%
%

Finally it is also necessary to impose some conditions on the density.\\

\textbf{Condition (B):} A density $f$ with support $S$ fulfils condition B if its restriction to $S$ is Lipschitz continuous 
(i.e. there exists $k_f$ such that $\forall x,y\in S, |f(x)-f(y)|\leq k_f\|x-y\|$) and there exists 
$f_0>0$ such that $f(x)\geq f_0$ for all $x\in S$. We denote $f_1=\max_{x\in S}f(x)$.

\begin{remark} 
The condition $f(x)\geq f_0>0$ for all $x\in S$ 
	is a necessary condition to test convexity, as indeed is mentioned in \cite{ghl2014}: `...an assumption like the
	density being bounded away from zero on its support is necessary for consistent
	decision rules.' \\
\end{remark}

\begin{theorem} \label{mainnonconvtheo}
	Let $K\in \mathcal{K}$, and let $\hat{f}_n$ be as defined  in  (\ref{defestimdens}). Assume that $h_n=\mathcal{O}(n^{-\beta})$
	for some  $0<\beta<1/d$.
	Assume also that the unknown density fulfils condition B. For the following decision problem, 
	\begin{equation} 
	\begin{cases}
	H_0: & S\in \mathcal{C} \\
	H_1: & S\notin \mathcal{C},
	\end{cases}
	\end{equation}
	\begin{itemize}
		\item[a)] the test  based on the statistic 
		$\hat{V}_n=\hat{\delta}\big(\mathcal{H}(\aleph_n)\setminus \aleph_n\big)^d$ with critical region $RC=\{\hat{V}_n\geq c_{n,\gamma}\},$
			where
		$$c_{n,\gamma}=\frac{1}{n}\Big(-\log(-\log(1-\gamma))+\log(n)+(d-1)\log(\log(n))+\log(\alpha_\mathcal{B}) \Big),$$
		has an asymptotic  level less than $\gamma$.
		\item[b)] Moreover, if $S\in \mathcal{A}$ is not convex, the power is 1 for sufficiently large $n$.
	\end{itemize} 
\end{theorem}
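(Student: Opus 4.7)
The plan is to split the argument along parts (a) and (b), reducing the null-hypothesis analysis to Theorem \ref{theocont} by controlling the plug-in error, and handling the alternative by exhibiting a ball of fixed radius in $\mathcal{H}(\aleph_n)\setminus\aleph_n$.

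For part (a), assume $H_0$: $S$ is compact, convex with non-empty interior. Convexity gives $N(\partial S,\eps)\leq C\eps^{-(d-1)}$, and Condition B provides a Lipschitz density bounded below by $f_0>0$. Choose $A:=\omega_d^{-1/d}\mathcal{B}(0,1)$, which has unit volume, is centred at the origin, and satisfies $\alpha_A=\alpha_\mathcal{B}$ by the scale-invariance of outer unit normals. Theorem \ref{theocont} then yields
\begin{equation*}
n\Delta(\aleph_n)^d-\log n-(d-1)\log\log n-\log\alpha_\mathcal{B}\;\deb\; U,
\end{equation*}
so that $\pr(\Delta(\aleph_n)^d>c_{n,\gamma})\to\gamma$ by the choice of $c_{n,\gamma}$. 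It remains to show that $n\hat V_n=n\Delta(\aleph_n)^d+o_P(1)$, which is the plug-in perturbation.

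There are two discrepancies to absorb. First, by convexity $\mathcal{H}(\aleph_n)\subset S$, and a classical convex-hull estimation argument gives $d_H(\mathcal{H}(\aleph_n),S)=o((\log n/n)^{1/d})$ a.s., strictly faster than the maximal-spacing scale $\Delta\sim(\log n/n)^{1/d}$. Balls realising the supremum in either definition can therefore be transferred between $S\setminus\aleph_n$ and $\mathcal{H}(\aleph_n)\setminus\aleph_n$ at negligible cost. Second, for $n$ large the centres $x$ achieving the supremum lie in the bulk $S\ominus r_n\mathcal{B}(0,1)$ with $r_n\asymp(\log n/n)^{1/d}$, and the nearest sample points $X_i$ defining the relevant Voronoi cells lie there as well, thanks to the $O((\log n/n)^{1/d})$ bound on the maximal Voronoi diameter for standard sets. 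On this bulk, Theorem 2.3 of \cite{gg02} applied to $K\in\mathcal{K}$ with $h_n=n^{-\beta}$, $0<\beta<1/d$, yields $\sup_{\text{bulk}}|f_n-f|=O_P(\sqrt{\log n/(nh_n^d)})+O(h_n)=O(n^{-\delta})$ for some $\delta>0$. Combining this with the Lipschitz property of $f$ and the Voronoi diameter bound gives $\sup_{\text{bulk}}|\hat f_n(x)/f(x)-1|=O(n^{-\delta'})$, and a direct manipulation of the radii $r/\hat f_n^{1/d}$ versus $r/f^{1/d}$ delivers $\hat V_n=\Delta(\aleph_n)^d(1+O(n^{-\delta'}))$, hence $n|\hat V_n-\Delta(\aleph_n)^d|=O_P(n^{-\delta'}\log n)=o_P(1)$.

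For part (b), suppose $S$ is standard but not convex. Then the interior of $\mathcal{H}(S)$ meets $S^c$: otherwise $\mathcal{H}(S)=\overline{\mathring{\mathcal{H}(S)}}\subset S$, contradicting non-convexity. Pick $z$ in this intersection and $\eta>0$ with $\mathcal{B}(z,\eta)\subset\mathcal{H}(S)$ and $\mathcal{B}(z,\eta)\cap S=\emptyset$. Standardness gives $d_H(\aleph_n,S)\to 0$ a.s., hence $d_H(\mathcal{H}(\aleph_n),\mathcal{H}(S))\to 0$ by continuity of the convex-hull operator, so $\mathcal{B}(z,\eta/2)\subset\mathcal{H}(\aleph_n)\setminus\aleph_n$ eventually. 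The nearest observation to $z$ lies in $S$ at distance $\geq\eta$ from $z$, where $f\geq f_0$; uniform consistency of $f_n$ (Giné--Guillou) then yields $\hat f_n(z)\geq f_0/2$ eventually. Using $z$ as centre in the supremum defining $\hat\delta$ gives
\begin{equation*}
\hat\delta(\mathcal{H}(\aleph_n)\setminus\aleph_n)\;\geq\;(\eta/2)(\omega_d f_0/2)^{1/d}\;>\;0,
\end{equation*}
while $c_{n,\gamma}=O(\log n/n)\to 0$, so the test rejects with probability one for all sufficiently large $n$. The main obstacle is the uniform plug-in bound $\sup|\hat f_n-f|/f=o(1/\log n)$ on the bulk: this rate must beat the logarithmic centring of the Gumbel limit, and achieving it requires a careful combination of Giné--Guillou, the Voronoi diameter estimate, and Lipschitz continuity of $f$, with extra care at the boundary strip where kernel estimators exhibit bias.
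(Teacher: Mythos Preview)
Your part (b) is essentially correct and matches the paper's argument in spirit: exhibit a fixed ball in $\mathcal{H}(S)\setminus S$, push it inside $\mathcal{H}(\aleph_n)$ via $d_H(\mathcal{H}(\aleph_n),\mathcal{H}(S))\to 0$, and lower-bound $\hat f_n$ at its centre. The paper packages the last step as a uniform lower bound on $\hat f_n$ over $\mathcal{H}(\aleph_n)$ (Lemma~\ref{newdens}(ii)), but your pointwise version is enough.

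Part (a), however, has a genuine gap. You aim for the two-sided identity $n\hat V_n=n\Delta(\aleph_n)^d+o_P(1)$, which would give asymptotic level \emph{equal} to $\gamma$. But the theorem only asserts level \emph{at most} $\gamma$, and the Remark following it says explicitly that no convergent level estimate is available here---only an upper bound. Your two-sided argument rests on the claim $d_H(\mathcal{H}(\aleph_n),S)=o\big((\log n/n)^{1/d}\big)$ for every compact convex $S$ with non-empty interior, and this is false: for a polytope the Hausdorff rate is exactly of order $(\log n/n)^{1/d}$, since a cap of depth $h$ at a vertex has volume $\asymp h^d$. The faster rate $(\log n/n)^{2/(d+1)}$ used in Theorem~\ref{testconvteo} requires the smoothness condition \textbf{(P)}, which is not assumed in Theorem~\ref{mainnonconvtheo}. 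Without it, balls realising $\Delta(\aleph_n)$ cannot in general be transferred into $\mathcal{H}(\aleph_n)$ at negligible cost, and the lower bound $\hat V_n\ge \Delta^d(1-o(1/\log n))$ fails.

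The fix is to drop the two-sided ambition. Under $H_0$ the inclusion $\mathcal{H}(\aleph_n)\subset S$ is automatic, so one only needs the one-sided density comparison $(f(x)/\hat f_n(x))^{1/d}\ge 1-\eps_n^+$ with $\eps_n^+\log n\to 0$, uniformly on $S$. This is exactly Lemma~\ref{newdens}(i), and it holds on all of $S$ (not just a bulk) because the kernel-induced boundary bias of $\E f_n$ is \emph{downward}: $\E f_n(x)\le f(x)+k_f c_K h_n$ everywhere. One then gets $\hat\delta\le \Delta/(1-\eps_n^+)$ directly, hence
\[
\pr(\hat V_n\ge c_{n,\gamma})\le \pr\big(V(\aleph_n)\ge (1-\eps_n^+)^d c_{n,\gamma}\big)\to \gamma
\]
by Theorem~\ref{theocont}. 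Your bulk restriction and hull-rate estimate are not needed for this direction and should be removed.
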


\begin{remark}
Notice that the `optimal' kernel sequence size, $h_n=h_0n^{1/(d+4)}$, satisfies the hypothesis of 
our theorem, so that any bandwidth selection method should be suitable for testing for convexity.\\
However, in the semi-parametric case it is possible to derive the  
asymptotic behaviour for the level under regularity conditions
 on the support. In this more general setup,  we will 
not have a convergent level estimation but only a bound for the level (the price to pay for estimating the density).
The proof of Theorem \ref{mainnonconvtheo} is given in Appendix B.
\end{remark}

\subsection{Simulations}

We have performed two simulation studies to assess the behaviour of our test in the scenarios described in Sections 
\ref{semiparam} and \ref{nparam}. For the first study, the data were drawn uniformly from  
sets $S\subset \mathbb{R}^2$, and we will perform the test defined in Section \ref{semiparam} to obtain estimations of the power and the level.
 In the second study, the non-parametric case, the data can be not uniformly drawn drawn,
and we  estimate the density using the estimator given by (\ref{estvordens}). 
In this case, we consider the same sets and density as in \cite{ghl2014}. 

\subsubsection{Semi-Parametric case}

The data were generated uniformly from the sets $S_\varphi=[0,1]^2 \setminus T_\varphi$, where $T_\varphi$ is the isosceles triangle with height $1/2$ (see Figure \ref{testconvfig}) whose angle at the vertex $(1/2,1/2)$ is equal to $\varphi$. If we have a random sample from $S_\varphi$, it is clear that as $\varphi$ increases, it should be easier to detect the non-convexity of the set. The results of the simulations are summarized in Table \ref{table0}.

 \begin{table}[!ht]
\begin{center}
 \footnotesize{
\begin{tabular}{|c|c|c|c|c|c|}
\hline
\multicolumn{2}{|c|}{$\varphi=\pi/4$}&\multicolumn{2}{c|}{$\varphi=\pi/6$}&\multicolumn{2}{c|}{$\varphi=\pi/8$} \\
\hline
     n   & $\hat{\beta}$  &  n   & $\hat{\beta}$     &   n    & $\hat{\beta}$ \\
\hline
    100   & .4            & 200     & .565           & 300     &  .543   \\
 \hline
    130   & .636          & 250     &  .787          & 350     &  .679\\
\hline
    160   & .835          & 300     &  .926          & 400     &  .846\\
 \hline
    200   &  .946         & 400     &  .996          & 500     &  .976 \\
 \hline
    300   & .997          & 500     &   1             & 600    &  .997\\
 \hline
\end{tabular}}
\end{center}
 \caption{Power  estimated over 1000 replications, for different values of $\varphi$, when the sample is uniformly distributed on $[0,1]^2\setminus T_\varphi$, where $T_\varphi$ is an isosceles triangle, (see Figure \ref{testconvfig}).}\label{table0}
\end{table}

\begin{figure}[htbp]
\begin{center}
\includegraphics[scale=.3]{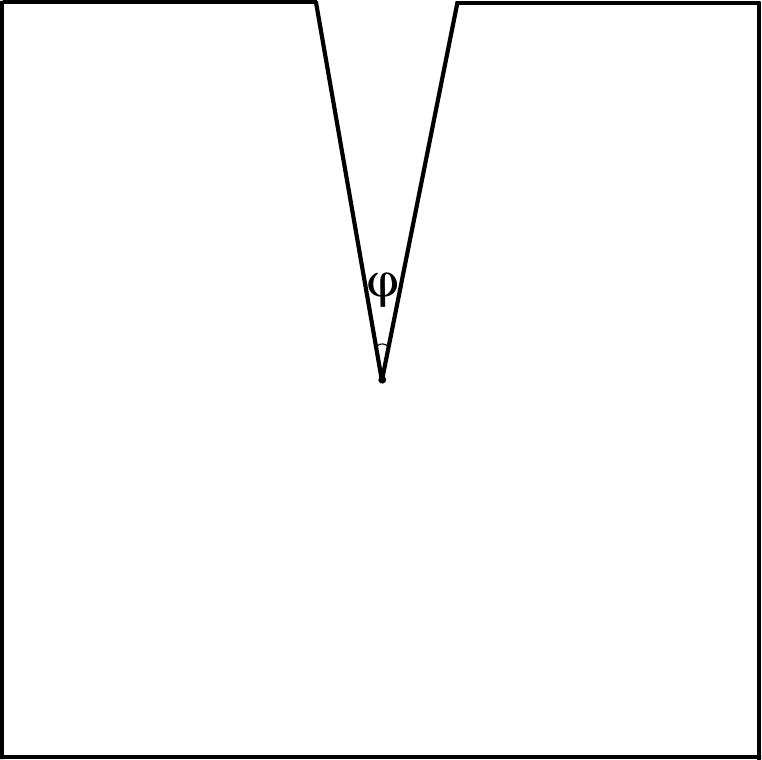} 
\caption{$[0,1]^2\setminus T_\varphi$ where $T_\varphi$ is an isosceles triangle with height $1/2$.}
\label{testconvfig}
\end{center}
\end{figure}

\subsubsection{Non-parametric case}

We performed a simulation study for the  same sets used in \cite{ghl2014}. Consider the curves $\gamma_{R,\theta}=R(\cos(\theta),\sin(\theta))$ with $\theta \in [\frac{3\pi(R-1)}{2R},\frac{3}{2}\pi]$ and the reflections of those curves along the $y$ axis (which will be denoted by  $\zeta_{R,\theta}$). We consider $\Gamma_R=T_{(0,R)}(\gamma_{R,\theta})\cup T_{(0,-R)}(\zeta_{R,\theta})$ with $\theta \in [\frac{3\pi(R-1)}{2R},\frac{3}{2}\pi]$, where $T_v$ is the translation along the vector $v$. It is easy to see that the length of every $\Gamma_R$ is $\frac{3}{2}\pi$. We will consider, for different values of $R$, the $S$-shaped sets (see the first row in Figure \ref{fig1})
\begin{small}$$S_R=T_{(0,R)}\left(\bigcup_{R-0.6\leq r\leq R+0.6}\gamma_{r,\theta}\right)\cup T_{(0,-R)}\left(\bigcup_{R-0.6\leq r\leq R+0.6}\zeta_{r,\theta}\right).$$\end{small}
Observe that when $R$ approaches infinity, the sets $S$ converge to a rectangle (which corresponds to the convex case). We have generated the data according to two different densities. The first one is the same as that considered in \cite{ghl2014}: that is, along the orthogonal direction of $\Gamma_R$, we choose a random variable with normal density (with zero mean and standard deviation $\sigma=0.15$) truncated to $0.6$ (the truncation is performed to ensure that we obtain a point in the set $S_R$). In the second case, we consider a random variable along the orthogonal direction of $\Gamma_R$ but uniformly distributed on $[-0.6,0.6]$. In Tables \ref{table1}  and \ref{table2}, we have summarized the results of the simulations, for different sample sizes (we performed the test $B=100$ times).
The results are quite encouraging and slightly better that those obtained in \cite{ghl2014} since the non-convexity is better detected 
(see Fig.~$7$ in \cite{ghl2014} for comparison) with no need for the decision rule to be calibrated.

 \begin{table}[!ht]
\begin{center}
 \footnotesize{
\begin{tabular}{|c|c|c|c|c|c|c|c|c|}
\hline
R &\multicolumn{2}{c|}{N=100}&\multicolumn{2}{c|}{N=250}&\multicolumn{2}{c|}{N=500}&\multicolumn{2}{c|}{N=1000} \\
\hline
      & np & unif             & np  & unif               &  np    & unif            &   np   & unif  \\
\hline
 1    & .13 & .44             & .55 & .99                &   1    &    1            &   1    &  1 \\
 \hline
 1.5  & .98 & 1               & 1   &  1                 &   1     &  1             &  1     & 1 \\
\hline
 3    & .38 & .24             & 1   &  1                 &    1     &  1            &   1    & 1 \\
 \hline
 6    & .08 & .09             & .41 & .66                &   1      & 1             &   1    & 1\\
 \hline
 12   & .01 & .05             & .02 & .08                &  .39    & .68            &  .98   & 1\\
 \hline
 24   &  0  & .07             & .01 & .05                &   0     & .09            &  .07   & .48\\
 \hline
$\infty$ &  0  & .04          & 0   & .09                &   0     & .04            &  .01   & .05\\
\hline
\end{tabular}}
\end{center}
 \caption{Power  estimated over $B$ replications, for different values of $R$, when the sample is uniformly distributed along the orthogonal direction of $\Gamma_R$.}\label{table1}
\end{table}
  
  \begin{table}[!ht]
\begin{center}
 \footnotesize{
\begin{tabular}{|c|c|c|c|c|c|c|c|c|}
\hline
R &\multicolumn{2}{c|}{N=100}&\multicolumn{2}{c|}{N=250}&\multicolumn{2}{c|}{N=500}&\multicolumn{2}{c|}{N=1000} \\
\hline
      & np & unif             & np  & unif               &  np    & unif            &   np   & unif  \\
\hline
 1    & 1   & 1               & 1   &  1                 &   1     &    1            &   1    &  1 \\
 \hline
 1.5  & 1   & 1               & 1   &  1                 &   1     &  1              &   1     & 1 \\
\hline
 3    & 1   & .99             & 1   &  1                 &   1     &  1              &   1    & 1 \\
 \hline
 6    & .67 & .41             & .99 &  1                 &   1     & 1               &   1    & 1\\
 \hline
 12   & .25 & .19             & .62 & .98                &  .85    & 1              &  .94   & 1\\
 \hline
 24   &  .1 & .30             & .30 & .92                &   .38   & 1              &  .48  & 1\\
 \hline
$\infty$ &  0  & .33          & .04   & .92               &   .06  & 1              &  .04  & 1\\
\hline
\end{tabular}}
\end{center}
 \caption{Power  estimated over $B$ replications, for different values of $R$, when the sample is drawn according to a truncated normal distribution along the orthogonal direction of $\Gamma_R$.}\label{table2}
\end{table}

\begin{figure}[!ht]
\begin{center}
\includegraphics[width=0.75\textwidth]{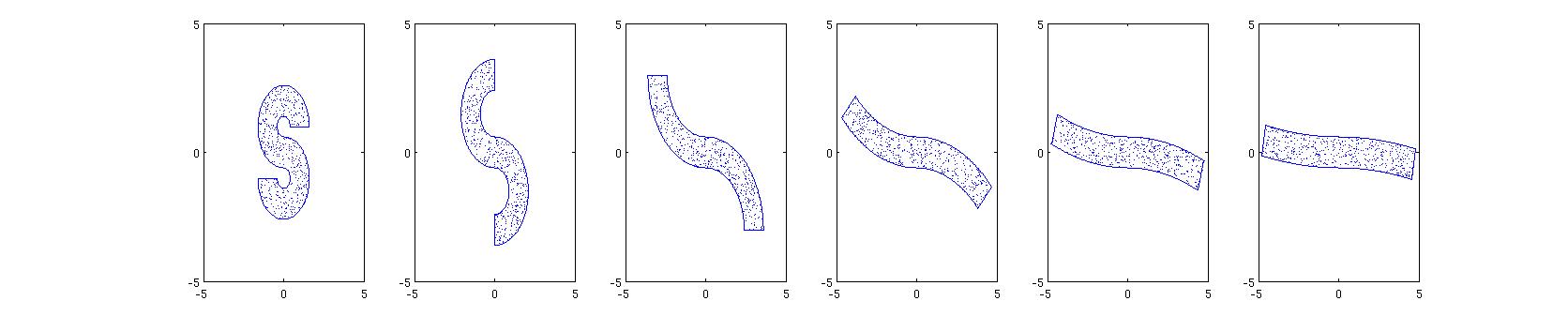}\\
 \includegraphics[width=0.75\textwidth]{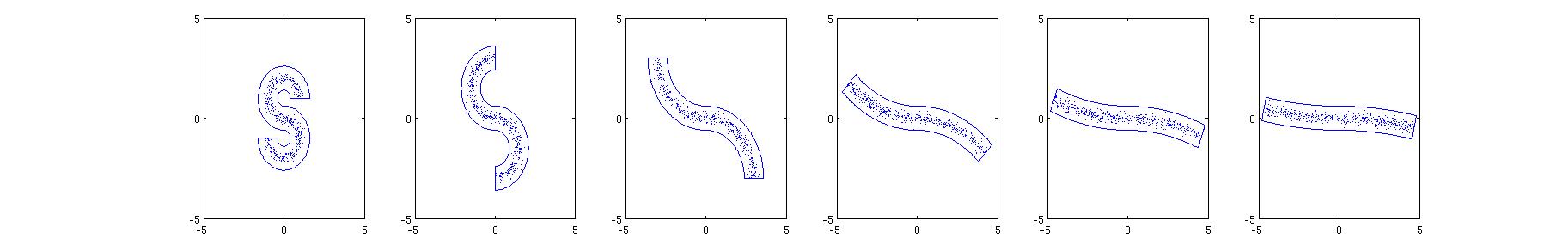}
\end{center}
\vspace{-.5cm} 
\caption{$S_R$ for different values of $R$ together with the sample drawn with a uniform radial noise (top) and 
with a truncated Gaussian noise (bottom)}.
\label{fig1}
\end{figure}
\newpage

\section{Appendix A}

In Appendix  A we proof the main result on the
generalization of the maximal-spacing, given in Theorem
\ref{theocont}. First we settle some
preliminary lemmas, then we prove a weaker version of Theorem
\ref{theocont}, for the case of piecewise constant densities on
disjoint sets. We continue by considering piecewise constant
densities, and finally we prove the result for H\"older continuous 
densities.  
\subsection{Preliminary Lemmas}

As we mentioned before, the proof of Corollary \ref{jan2} follows from a simple rescaling in 
the lemmas stated in \cite{jan87}, used to prove Theorem \ref{jan}. In particular the following rescaled lemma will be
used in this section.  

\begin{lemma}  \label{lem0} Let $S\subset \R{d}$ be a bounded set, $|S|>0$, $|\partial S|=0$, and $\aleph_n=\{X_1,\dots,X_n\}$ iid
	random vectors with uniform distribution on $S$. Then, there exists $a^S_-=a^S_-(w,n)$, $a^S_+=a^S_+(w,n)$ such that $a^S_-\rightarrow \alpha_A$ and
	$a^S_+\rightarrow \alpha_A$ as $w\rightarrow \infty$ and $w/n\rightarrow 0$,  such that if $\gamma=\frac{n}{|S|} w^{d-1}e^{-w},$
	\begin{equation}\label{eq1lem0}
	\exp(-\gamma a^S_+|S|)\leq \pr \big(nV(\aleph_n)<w\big)\leq \exp(-\gamma a^S_-|S|).
	\end{equation}
	The functions $a^S_+$ and $a^S_-$  only depend
	on the ``shape'' of $S$ (i.e. are invariant by similarity transformations).
	Without loss of generality they can be chosen such that,  for all $w'\geq w$ and $n'\geq n$:  
	$a^S_+(w',n')\leq a_+(w,n)$ and $a^S_(-w',n')\geq a_-(w,n)$.
\end{lemma}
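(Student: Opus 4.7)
The plan is to reduce the statement to the corresponding lemma in \cite{jan87} (which is the engine behind Theorem \ref{jan}) by a rescaling argument. Let $\lambda=|S|^{-1/d}$ and define the rescaled sample $\aleph'_n=\lambda\aleph_n=\{\lambda X_1,\ldots,\lambda X_n\}$, which is iid uniform on $\lambda S$, a set with $|\lambda S|=1$. Since the uniform density on $S$ equals $1/|S|$, we have $f(x)^{1/d}\equiv\lambda$, so in Definition \ref{def1} the inclusion $x+(r/\lambda)A\subset S\setminus\aleph_n$ is equivalent, under the bijection $y=\lambda x$, to $y+rA\subset \lambda S\setminus\aleph'_n$. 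Consequently $\Delta(\aleph_n)=\Delta^*(\aleph'_n)$ (where $\Delta^*$ is Janson's maximal-spacing functional applied to $\aleph'_n$), and thus $V(\aleph_n)=V^*(\aleph'_n)$.

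Next, I would invoke Janson's lemma for uniform samples on a compactum of unit Lebesgue measure. It provides functions $a'_\pm(w,n)\to\alpha_A$ (as $w\to\infty$ and $w/n\to 0$) such that
\begin{equation*}
\exp\!\bigl(-n w^{d-1}e^{-w}\, a'_+(w,n)\bigr)\ \le\ \pr\!\bigl(nV^*(\aleph'_n)<w\bigr)\ \le\ \exp\!\bigl(-n w^{d-1}e^{-w}\, a'_-(w,n)\bigr).
\end{equation*}
Taking $a^S_\pm(w,n):=a'_\pm(w,n)$ and noting that $nw^{d-1}e^{-w}=\gamma\,|S|$ with $\gamma=(n/|S|)w^{d-1}e^{-w}$, the inequality \eqref{eq1lem0} follows at once from $V(\aleph_n)=V^*(\aleph'_n)$.

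For the shape invariance, I would observe that if $T\colon\mathbb R^d\to\mathbb R^d$ is a similarity transformation, then $T(\aleph'_n)$ is iid uniform on $T(\lambda S)$, and the distribution of $V^*$ is invariant under isometries while already normalised by the volume in the rescaling step; hence $a^S_\pm$ depends on $S$ only through its similarity class. For the monotonicity, I would replace $a^S_+(w,n)$ by $\sup_{w'\ge w,\,n'\ge n}a^S_+(w',n')$ and $a^S_-(w,n)$ by $\inf_{w'\ge w,\,n'\ge n}a^S_-(w',n')$ (or simply redefine them as the appropriate envelopes): both modifications preserve the limits $\alpha_A$ and only tighten the bounds in \eqref{eq1lem0}, so they remain valid.

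I do not anticipate a serious obstacle: the argument is essentially book-keeping. The only points where one must be careful are (i) checking that the factor $f(x)^{1/d}$ in Definition \ref{def1} produces exactly the right scaling so that $\Delta$ is invariant under the similarity $\lambda\cdot$ (this is where the specific form of Definition \ref{def1} pays off), and (ii) handling the envelope step for monotonicity without losing the convergence to $\alpha_A$.
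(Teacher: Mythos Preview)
Your proposal is correct and follows exactly the route the paper indicates: the paper does not give a proof of this lemma but simply states that it ``follows from a simple rescaling in the lemmas stated in \cite{jan87}''. Your rescaling by $\lambda=|S|^{-1/d}$, the identification $\Delta(\aleph_n)=\Delta^*(\aleph'_n)$ via Definition~\ref{def1}, and the envelope trick for monotonicity are precisely the details one would write out to justify that sentence.
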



Next we settle two lemmas whose proofs are quite
similar. The first one (Lemma \ref{lem01}) 
gives a first rough upper bound for the maximal spacing. 
The second one (Lemma \ref{lem02})
bounds a sort ``constrained'' maximal-spacing (the centre $x$ of the largest set $x+rA$ missing the observation 
is constrained to be in a ``small'' subset of the support).\\

Recall first (see \cite{amb08}) that, since $A$ is convex, there exist $\eps_0>0$ such that,
\begin{equation}\label{propconvA}
\text{for all }\eps\leq \eps_0 \text{ , } A^{-\eps}\neq
\emptyset \text{ and  }|A^{-\eps}|=|A|-\eps|\partial A|_{d-1}+o(\eps).
\end{equation}

It can also be proved easily that,
\begin{equation} \label{incconv}
\text{for all } r>0, \text{ and for all } x\in \mathcal{B}(0,\eps_0/r),\ x+(rA)^{-\|x\|}\subset rA.
\end{equation}

\begin{lemma}  \label{lem01} Let $\aleph_n=\{X_1,\dots,X_n\}$ be $iid$ random vectors in $\mathbb{R}^d$,
	with common density $f$. Assume that $f$ has bounded support $S$ and there exist $0<f_0<f_1<\infty$ such that $f_0\leq f(x)\leq f_1$
	for all $x\in S$. 
	Then,  for all $r_f>0$ such that $r_f^d>2f_1/f_0$ we have,
	\begin{equation*}
	\Delta(\aleph_n) \leq r_f \left( \frac{\log(n)}{n} \right)^{1/d} \text{ eventually almost surely. }
	\end{equation*}
\end{lemma}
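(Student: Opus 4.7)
The strategy is a Borel--Cantelli argument: set $r_n := r_f(\log(n)/n)^{1/d}$ and show that $\pr(\Delta(\aleph_n) > r_n)$ is summable in $n$. Relative to the uniform case of \cite{jan87}, the novel difficulty is that the candidate empty set $x + (r_n/f(x)^{1/d})A$ has an $x$-dependent size. The key idea is to absorb this dependence into a \emph{deterministic} worst-case set via the uniform bounds $f_0\le f\le f_1$, combining this with a covering of $S$.

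Setting $r_0 := r_n/f_1^{1/d}$, I would first discretize: take $\delta_n = n^{-\alpha}$ with $\alpha$ slightly larger than $1/d$, and cover $S$ by $N \le C_S \delta_n^{-d}$ balls $\mathcal{B}(y_j,\delta_n)$ centred in $S$. On the event $\{\Delta(\aleph_n) > r_n\}$ there exist $x \in S$ and $r' := r_n/f(x)^{1/d} \ge r_0$ with $x + r'A \subset S \setminus \aleph_n$. Pick a nearest cover centre $y_j$, so $\|y_j - x\| \le \delta_n$. Applying \eqref{incconv} to $r'A$ (note $\delta_n\le \eps_0/r'$ eventually, since $r'\to 0$ while $\eps_0/\delta_n\to\infty$), $y_j + (r'A)^{-\delta_n} \subset x + r'A$; and since $0\in A$ and $r_0\le r'$, the inclusion $r_0A\subset r'A$ gives $(r_0 A)^{-\delta_n} \subset (r'A)^{-\delta_n}$. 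Thus the deterministic set $y_j + (r_0 A)^{-\delta_n}$ misses $\aleph_n$, so
\begin{equation*}
\pr(\Delta(\aleph_n) > r_n) \leq \sum_{j=1}^{N} \pr\big(\aleph_n \cap (y_j + (r_0 A)^{-\delta_n}) = \emptyset\big).
\end{equation*}

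Each summand is controlled using $f \ge f_0$ and \eqref{propconvA}:
$$\pr_X\big(y_j + (r_0 A)^{-\delta_n}\big) \ge f_0\, r_0^d\, \big|A^{-\delta_n/r_0}\big| = \frac{f_0}{f_1}\, r_f^d\, \frac{\log(n)}{n}\,(1 - o(1)),$$
where $\delta_n/r_0 \to 0$ because $\alpha > 1/d$. Hence each empty probability is at most $n^{-(f_0/f_1)r_f^d\,(1 - o(1))}$, and the union bound yields
$$\pr(\Delta(\aleph_n) > r_n) = O\big(n^{\alpha d - (f_0/f_1)r_f^d + o(1)}\big).$$
This is summable as soon as $(f_0/f_1)r_f^d > \alpha d + 1$; taking $\alpha \downarrow 1/d$ gives exactly the assumed threshold $r_f^d > 2 f_1/f_0$, and Borel--Cantelli concludes.

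The main obstacle is the calibration of $\delta_n$. It must be small enough that the erosion $(r_0 A)^{-\delta_n}$ loses only a negligible fraction of $|r_0 A|$ (forcing $\delta_n = o(r_0)$, i.e.\ $\alpha > 1/d$), but it cannot be too small, as each cover centre contributes to the union bound at total cost $n^{\alpha d}$. The condition $r_f^d > 2 f_1/f_0$ is the tight threshold that emerges by pushing $\alpha$ arbitrarily close to $1/d$ in $\alpha d - (f_0/f_1)r_f^d < -1$.
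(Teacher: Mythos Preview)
Your proposal is correct and follows essentially the same route as the paper's proof: cover $S$ by small balls, pass from the unknown $x$ to a nearby cover centre via the erosion inclusion \eqref{incconv}, bound each empty-set probability using $f\ge f_0$ and \eqref{propconvA}, and conclude by a union bound and Borel--Cantelli. The one minor difference is that the paper simply takes covering radius $n^{-1/d}$: since $r_0\sim c(\log n)^{1/d}n^{-1/d}$, the ratio $n^{-1/d}/r_0\sim(\log n)^{-1/d}\to 0$ already, so there is no need to introduce $\alpha>1/d$ and then let $\alpha\downarrow 1/d$; with $\alpha=1/d$ one obtains directly $\pr(\Delta(\aleph_n)>r_n)\le C_S\,n^{1-(f_0/f_1)r_f^d+o(1)}$, summable under the stated hypothesis.
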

\begin{proof}
	First observe that $S$ can be covered with $N(S,n^{-1/d})\leq C_Sn$ balls of radius
	$n^{-1/d}$ centered at some points $\{x_1,\dots,x_{\nu_n}\}\subset S$. Denote
	$w_n=r_f \left(\frac{ \log(n)}{n}\right)^{1/d}$ with
	$r_f^d>2f_1/f_0$. First observe that $\Delta(\aleph_n)\geq w_n
	\Leftrightarrow \exists x\in S, \text{ such that } x+w_n
	f(x)^{-1/d}A\subset S\setminus \aleph_n$, then
	$\Delta(\aleph_n)\geq w_n \Rightarrow \exists x\in S, \text{ such
		that } x+w_n f_1^{-1/d}A\subset S\setminus \aleph_n$. 
	Applying \eqref{incconv}, for sufficiently large $n$ (that is possible because 
	$n^{-1/d} \ll w_n$) we get
	
	\begin{equation}\label{ineqprobap}
	\Delta(\aleph_n)\geq w_n \Rightarrow \exists x_i \text{ such that } x_i+(w_n f_1^{-1/d}A)^{-1/n^{1/d}}\subset S\setminus
	\aleph_n.
	\end{equation}
	
	Next notice that,
	\begin{align*}
	\pr\Big(x_i+\big(w_n f_1^{-1/d}A\big)^{-1/n^{1/d}}\subset S\setminus \aleph_n\Big)
	&= \ \Bigg(1-\pr_{X} \Big( x_i+\big(w_n f_1^{-1/d}A\big)^{-1/n^{1/d}} \Big)\Bigg)^n\\
	&\leq\  \Bigg(1-f_0 \Big|\big(w_n f_1^{-1/d}A\big)^{-1/n^{1/d}}\Big| \Bigg)^n\\
	&\leq \ \Bigg(1-\Big(\frac{f_0}{f_1}w_n^d-\frac{f_0}{f_1^{\frac{d-1}{d}}}w_n^{d-1}n^{-1/d}(1+o(1))\Big) \Bigg)^n.
	\end{align*}

	The last inequality is obtained using \eqref{propconvA}. Since $w_n\gg  n^{-1/d}$, we finally get

	\begin{equation*}
	\pr\Big(x_i+\big(w_n f_1^{-1/d}A\big)^{-1/n^{1/d}}\subset S\setminus \aleph_n\Big)\leq
	\left(1-\frac{f_0}{f_1} w_n^d(1+o(1))\right)^n.
	\end{equation*}
	From this inequality and (\ref{ineqprobap}) it follows that,

	\begin{align*}
	\pr\Big(\Delta(\aleph_n)\geq r_f\big(\log(n)n/n\big)^{1/d}\Big)& \leq  N(S,n^{-1/d}) \Big(1-\frac{f_0}{f_1}w_n^d(1+o(1))\Big)^n\\
	&  \leq C_Sn \exp\Big(-\frac{f_0}{f_1}nw_n^d(1+o(1))\Big),
	\end{align*}
	
	and therefore, 
	$$\pr\Big(\Delta(\aleph_n)\geq r_f(\log(n)/n)^{1/d}\Big)\leq C_S n^{1-r_f^df_0/f_1+o(1)}.$$
	Finally, since $r_f^d>2f_1/f_0$ we have $\sum \pr\big(\Delta(\aleph_n)\geq r_f(\log(n)/n)^{1/d}\big)<\infty$. 
	Thus, the Borel-Cantelli Lemma entails that $\Delta(\aleph_n)\leq r_f(\log(n)/n)^{1/d}$ eventually almost surely.
\end{proof}

\begin{lemma}  \label{lem02} Let $\aleph_n=\{X_1,\dots,X_n\}$ be iid random vectors in $\mathbb{R}^d$ with common distribution 
	distribution $\mathbb{P}_X$ supported on a compact set $S$ and density $f$ continuous on $S$. Assume that there exists $f_0>0$ such that $f(x)\geq f_0$ $\forall x\in S$. 
	Let  $G_n$ be a sequence of sets included in S,
	with the following property: there exist $C$ such that $N(G_n,n^{-1/d})\leq Cn^{1-a}(\log(n))^{b}$ for some $a>0$ and $b>0$. 
	Let $A$ be a
	compact and convex set with $|A|=1$ such that its barycenter is
	the origin of $\mathbb{R}^d$. Let us denote
	\begin{align*}
	\Delta(\aleph_n,G_n)\ = &\ \sup \Big\{r: \ \exists x\in G_n \text{ such that } x+\frac{r}{f(x)^{1/d}}A\subset S\setminus \aleph_n \Big\},\\
	V(\aleph_n,G_n)\ = &\ \Delta^d(\aleph_n,G_n),\\
	U(\aleph_n,G_n)\ = &\ nV(\aleph_n,G_n)-\log(n)-(d-1)\log(\log(n))-\log(\alpha_A).
	\end{align*}
	
	Then, 
	$\pr \big(U(\aleph_n,G_n)\geq -\log(\log(n))\big)\rightarrow 0.$

\begin{proof}
	Let us first cover $G_n$ with $\nu_n=N(G_n,n^{-1/d})$ balls of radius $n^{-1/d}$,
	centred at some points $\{x_1,\dots,x_{\nu_n}\}$ belonging to $S$, and choose $w_n=(\frac{\log(n) +(d-2)\log(\log(n))+\log(\alpha_A)}{n})^{1/d}$ (observe that $w_n\gg(1/n)^{1/d}$).
	As in the proof of Lemma \ref{lem01} we have,
	$$\Delta(\aleph_n)\geq w_n \Leftrightarrow \exists x\in G_n, \text{ such that } x+w_n f(x)^{-1/d}A\subset S\setminus \aleph_n.$$
	which implies,
	\begin{equation*}
	\Delta(\aleph_n)\geq w_n \Rightarrow \exists x_i \ \exists x\in\mathcal{B}(x_i,n^{-1/d}),\text{ such that }
	x_i+(w_n f(x)^{-1/d}A)^{-1/n^{1/d}}\subset S\setminus \aleph_n.
	\end{equation*}
	
	Therefore, 
	\begin{equation*}
	\pr\left(x_i+(w_n f(x)^{-1/d}A)^{-1/n^{1/d}}\subset S\setminus \aleph_n\right)=\Bigg(1-\mathbb{P}_X \Big( x_i+\big(w_n f(x)^{-1/d}A\big)^{-(1/n^{1/d})} \Big)\Bigg)^n.
	\end{equation*}
	
	With rough bounds on the density,
	
	\begin{equation*}
	\mathbb{P}_X\Big( x_i+\big(w_n f(x)^{-1/d}A\big)^{-1/n^{1/d}} \Big)\geq \frac{\min_{t\in S\cap (x_i+w_nf_1^{-1/d}A)} f(t)}{\max_{t\in S\cap 
			\mathcal{B}(x_i,n^{-1/d})} f(t)} w_n^d(1+o(1)).
	\end{equation*}
	
	Since $f$ is  uniformly continuous on $S$, $A$ is bounded, $w_n\rightarrow 0$ and $n^{-1/d}\rightarrow 0$,
	for all $c<1$ there exist $n_c$ such that for all $n\geq n_c$ 
	\begin{equation*}
	\pr\left(x_i+(w_n f(x)^{-1/d}A)^{-1/n^{1/d}}\subset S\setminus \aleph_n\right)\leq
	\left(1-c w_n^d(1+o(1))\right)^n,
	\end{equation*}
	then,
	\begin{equation*}
	\pr\big(\Delta (\aleph_n,G_n)\geq w_n \big)\leq C n^{1-a}(\log n)^{b}(1-cw_n^d(1+o(1)))^n.
	\end{equation*}
	
	Taking $c=1-a/2$, we finally get that
	\begin{equation*}
	\pr \big(U(\aleph_n,G_n)\geq -\log(\log(n))\big)\leq C \alpha_A^{-1+a/2}n^{-a/2}(\log(n))^{b-(1-a/2)(d-2)}(1+o(1))\rightarrow 0.
	\end{equation*}

\end{proof}
\end{lemma}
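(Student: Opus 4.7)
The goal is to show that the probability $\pr(U(\aleph_n,G_n)\geq -\log\log n)\to 0$, which unwinds to saying that with high probability $\Delta(\aleph_n,G_n)\leq w_n$, where
\[
w_n=\left(\frac{\log n+(d-2)\log\log n+\log\alpha_A}{n}\right)^{1/d}.
\]
My plan is to follow the same template as Lemma \ref{lem01}: discretize $G_n$, pass each ``bad'' event to a nearby cover point via erosion of the convex set $A$, bound each individual probability, then apply a union bound. The gain compared with Lemma \ref{lem01} is that the covering number is subpolynomial, $\nu_n:=N(G_n,n^{-1/d})\leq Cn^{1-a}(\log n)^b$, which gives room to absorb a factor of $\exp(-n w_n^d)\sim n^{-1}$ and still beat the union bound.

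First I would fix a cover $\{x_1,\dots,x_{\nu_n}\}\subset S$ of $G_n$ by balls of radius $n^{-1/d}$. If $\Delta(\aleph_n,G_n)\geq w_n$, then some $x\in G_n$ satisfies $x+w_n f(x)^{-1/d}A\subset S\setminus\aleph_n$. Choosing a cover point $x_i$ with $\|x-x_i\|\leq n^{-1/d}$ and using \eqref{incconv} (valid since $n^{-1/d}\ll w_n$), we may replace $x$ by $x_i$ at the cost of eroding the convex set by $n^{-1/d}$. The event is then contained in the union over $i$ of the events $\{x_i+(w_n f(x)^{-1/d}A)^{-n^{-1/d}}\subset S\setminus\aleph_n\}$ with $x$ ranging over $\mathcal{B}(x_i,n^{-1/d})$.

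Next I would estimate each probability. Since $\aleph_n$ is iid, each such event has probability $(1-\pr_X(\cdot))^n$. Using \eqref{propconvA} to compare the eroded convex set's Lebesgue measure with $|A|=1$, combined with the density ratio $\min f/\max f$ over the two relevant small regions, uniform continuity of $f$ on the compact $S$ lets me replace this ratio by any constant $c<1$ once $n$ is large enough. This gives the bound
\[
\pr\bigl(\Delta(\aleph_n,G_n)\geq w_n\bigr)\leq \nu_n\bigl(1-cw_n^d(1+o(1))\bigr)^n\leq Cn^{1-a}(\log n)^b\exp\bigl(-cnw_n^d(1+o(1))\bigr).
\]
Substituting $nw_n^d=\log n+(d-2)\log\log n+\log\alpha_A$ and choosing, say, $c=1-a/2$, the right-hand side becomes $C\alpha_A^{-c}n^{-a/2}(\log n)^{b-c(d-2)}(1+o(1))\to 0$.

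The main obstacle is the interplay between the erosion error and the window $w_n$: one must check that $n^{-1/d}$ is negligible compared with $w_n$ (so the erosion of $w_nf(x)^{-1/d}A$ by $n^{-1/d}$ only changes its Lebesgue measure by a $(1+o(1))$ factor) while simultaneously $c$ can be taken arbitrarily close to $1$ via uniform continuity of $f$. Getting the choice of $c$ right so that the polynomial gain $n^{-a/2}$ dominates the surviving $(\log n)^b$ factor from the covering bound is what makes the argument work; every other step is a routine union bound.
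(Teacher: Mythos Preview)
Your proposal is correct and follows essentially the same argument as the paper: the same cover of $G_n$ at scale $n^{-1/d}$, the same choice of $w_n$, the same erosion step via \eqref{incconv}, the same use of uniform continuity to push the density ratio to any $c<1$, and the same final choice $c=1-a/2$ in the union bound. The resulting estimate $C\alpha_A^{-c}n^{-a/2}(\log n)^{b-c(d-2)}(1+o(1))\to 0$ matches the paper's exactly.
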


The next lemma relates the behaviour of the maximal-spacing
for  two different densities having the same support.

\begin{lemma} \label{lem1} Let us consider $f$ and $h$, two 
	densities with compact support $S$ such that, $h(x)>h_0$ for all $x\in S$ and $\max_{x\in S}$ $|f(x)-h(x)|\leq \eps h_0$ for a given $\eps\in(0,1/2)$. Denote  by $n_0=\lfloor n(1-2\eps)\rfloor$
	and $n_1=\lceil n(1+2\eps)\rceil$ the floor and ceiling of $n(1-2\eps)$ and  $n(1+2\eps)$ respectively. 
	For any  $w\in \mathbb{R}$,   let us define 
	$w_{n,0}=\frac{w(1-2\eps-n^{-1})}{(1+\eps)}$ and $w_{n,1}=\frac{w(1-\eps)}{1+2\eps}$. Then,
	\begin{equation} \label{lem1eq1}
	\pr\big(n_0V(\mathcal{Y}_{n_0})\leq w_{n,0} \big)\Big(1-\frac{1-\eps}{n\eps}\Big)\leq \pr\big(nV(\aleph_n)\leq w \big),
	\end{equation}
	and
	\begin{equation} \label{lem1eq2}
	\pr\big(nV(\aleph_n)\leq w \big)\leq \pr\big(n_1V(\mathcal{Y}_{n_1})\leq w_{n,1} \big)\Big(1-\frac{1+2\eps+n^{-1}}{(n\eps+1)(1+\eps)} \Big)^{-1},
	\end{equation}
	where $\mathcal{Y}_{n_0}=\{Y_1,\dots,Y_{n_0}\}$ and
	$\mathcal{Y}_{n_1}=\{Y_1,\dots,Y_{n_1}\}$ are iid random vectors on $\mathbb{R}^d$, with density $h$, and
	$\aleph_n=\{X_1,\ldots,X_n\}$ are $iid$  random vectors with density $f$.
\begin{proof}
	We first prove (\ref{lem1eq1}). Observe that $X$ can be generated from the following mixture:
	with probability $p=1-\eps$, $X$ is drawn with density $h$,
	and, with probability $1-p$, $X$ is drawn with the law given
	by the density $g(x)=\frac{f(x)-h(x)(1-\eps)}{\eps}\ind_S(x)$.
	Let us denote by $N_0$ the number of points drawn according to $h$
	on $S$ and $\aleph_{N_0}^*=\{Y_1,\ldots,Y_{N_0}\}$ the
	associated sample. Let us recall that
	$$\Delta(\aleph_n)=\sup \Big\{r: \ \exists x \text{ such that } x+\frac{r}{f(x)^{1/d}}A\subset S\setminus \aleph_n \Big\}.$$
	Observe that 
	$$\sup_x h_0|f(x)/h(x)-1|\leq \sup_x h(x)|f(x)/h(x)-1|\leq h_0\eps,$$
	so $f(x)/h(x)\leq 1+\eps$. Then we have,
	$$\Delta(\aleph_n)\leq (1+\eps)^{1/d} \sup \Big\{r: \ \exists x \text{ such that } x+\frac{r}{h(x)^{1/d}}A\subset S\setminus \aleph_n \Big\}.$$
	From the inclusion $\aleph_{N_0}^*\subset \aleph_n$ we get
	$$\Delta(\aleph_n)\leq (1+\eps)^{1/d} \sup \Big\{r: \ \exists x \text{ such that } x+\frac{r}{h(x)^{1/d}}A\subset S\setminus
	\aleph_{N_0}^* \Big\},$$ and therefore $\Delta(\aleph_n)\leq
	(1+\eps)^{1/d}\Delta(\aleph_{N_0}^*)$, which entails that
	$V(\aleph_n)\leq (1+\eps)V(\aleph_{N_0}^*)$. Then, for all $w>0$,
	$$\pr\big(nV(\aleph_n)\leq w\big)\geq \pr\big((1+\eps)nV(\aleph_{N_0}^*)\leq w\big),$$
	and
	$$\pr\big(nV(\aleph_n)\leq w\big) \geq \pr\Big(\big((1+\eps)nV(\aleph_{N_0}^*)\leq w\big) \cap \big(N_0\geq n_0\big)\Big).$$

	For $N_0\geq n_0$, let us denote by $\mathcal{Y}_{n_0}=\{Y_1,\ldots,Y_{n_0}\}$ the $n_0$ first values of $\aleph_{N_0}^*$. Clearly we have
	$V(\aleph_{N_0}^*)\leq V(\mathcal{Y}_{n_0})$ so,
	$$\pr^{N_0\geq n_0}\left((1+\eps)nV(\aleph_{N_0}^*)\leq w\right)\geq \pr\Big((1+\eps)nV(\mathcal{Y}_{n_0})\leq w\Big),$$
	where $\pr^{N_0\geq n_0}$ denotes the conditional probability given ${N_0\geq n_0}$.
	Therefore,
	$$\pr\big(nV(\aleph_n)\leq w\big) \geq \pr \Big(n_0 V(\mathcal{Y}_{n_0})\leq \frac{w n_0}{(1+\eps) n} \Big) \pr (N_0\geq n_0).$$
	
	On the other hand, since   $N_0\sim Bin(n,1-\eps)$, we obtain,
	$$\pr(N_0<n_0)=\pr\Big(N_0-(1-\eps )n<n_0-(1-\eps )n\Big)  \leq \pr(N_0-(1-\eps)n\leq -\eps n).$$
	Since $n_0=\lfloor n(1-2\eps)\rfloor$, $n_0-n(1-\eps)\leq-\eps n$, which together with Chebyshev's inequality entails that 
	$$\pr(N_0<n_0)\leq \frac{n\eps(1-\eps)}{n^2\eps^2}=\frac{(1-\eps)}{n\eps},$$
	and then,
	$$\pr(N_0\geq n_0)\geq 1-\frac{(1-\eps)}{n\eps}.$$
	Let us denote by  $w_{n,0}=\frac{w(1-2\eps-n^{-1})}{(1+\eps)}$. Since
	$n(1-2\eps)-1\leq n_0$ we have 
	$w_{n,0}\leq \frac{w n_0}{(1+\eps) n}$, from where it follows that
	$$\pr\big(nV(\aleph_n)\leq w\big) \geq \pr \Big(n_0 V(\mathcal{Y}_{n_0})\leq w_{n,0} \Big) \left(1-\frac{1-\eps}{n\eps} \right).$$
	
	Equation (\ref{lem1eq2}) is proved in the same way.
	We just provide a sketch of the proof.  The
	key point for the proof of \eqref{lem1eq1} was to think the law of a random variable $Y$ drawn with the density $h$ as the following mixture:
	with probability $p=\frac{1}{1+\eps}$, $Y$ as a
	random variable with  density $f$, and, with
	probability $1-p$, $Y$ is drawn with density $g(x)=\frac{h(x)(1+\eps)-f(x)}{\eps} \ind_S(x)$. Next,
	we consider a sample $\mathcal{Y}_{n_1}=\{Y_1,\ldots,Y_{n_1}\}$ of
	iid copies of  $Y$, (that follows a law given by $h$). Denote by $N$
	the number of the points that drawn according to the
	density $f$ and $\mathcal{Y}_N^*=\{X_1,\ldots X_N\}$ these points.
	The rest of the proof follows using the same argument to prove
	(\ref{lem1eq1}).
\end{proof}
\end{lemma}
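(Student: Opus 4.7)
The plan is to establish both inequalities by coupling arguments built on mixture decompositions between the two densities. Since $|f - h| \leq \eps h_0 \leq \eps h$ on $S$, one obtains the double pointwise bound $(1-\eps) h \leq f \leq (1+\eps) h$. This makes $g = (f - (1-\eps)h)/\eps$ a bona fide probability density on $S$, and yields the decomposition $f = (1-\eps) h + \eps g$; symmetrically, $g' = ((1+\eps)h - f)/\eps$ is a density and $h = \frac{1}{1+\eps} f + \frac{\eps}{1+\eps} g'$.

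For \eqref{lem1eq1}, I would generate each $X_i$ by first flipping a $\mathrm{Bernoulli}(1-\eps)$ coin and drawing from $h$ on success and from $g$ on failure. Let $N_0 \sim \mathrm{Bin}(n, 1-\eps)$ count the $h$-draws and let $\aleph^*_{N_0}$ be the corresponding subsample. Two comparisons then drive the argument. First, $f \leq (1+\eps) h$ gives $f(x)^{-1/d} \geq (1+\eps)^{-1/d} h(x)^{-1/d}$, so an $h$-rescaled set of radius $r(1+\eps)^{-1/d}$ is contained in the $f$-rescaled set of radius $r$, producing $\Delta(\aleph_n) \leq (1+\eps)^{1/d} \widetilde{\Delta}(\aleph_n)$, where $\widetilde{\Delta}$ uses $h$ for the rescaling. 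Second, since $\aleph^*_{N_0} \subset \aleph_n$, thinning the sample only enlarges $\widetilde{\Delta}$, so $\widetilde{\Delta}(\aleph_n) \leq \Delta(\aleph^*_{N_0})$; combined, $V(\aleph_n) \leq (1+\eps)\, V(\aleph^*_{N_0})$. On the event $\{N_0 \geq n_0\}$, the first $n_0$ points $\mathcal{Y}_{n_0}$ of $\aleph^*_{N_0}$ are an iid $h$-sample and the same monotonicity yields $V(\aleph^*_{N_0}) \leq V(\mathcal{Y}_{n_0})$. A Chebyshev bound on the binomial, with $n_0 = \lfloor n(1-2\eps)\rfloor$ chosen so that $n_0 - n(1-\eps) \leq -\eps n$, gives $\pr(N_0 < n_0) \leq (1-\eps)/(n\eps)$. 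Using $n_0 \geq n(1 - 2\eps - n^{-1})$ to absorb the floor into $w_{n,0} = w(1 - 2\eps - n^{-1})/(1+\eps)$ produces \eqref{lem1eq1}.

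For \eqref{lem1eq2}, the roles of $f$ and $h$ are swapped through the dual decomposition. I would realize an $h$-sample $\mathcal{Y}_{n_1}$ so that the subsample $\mathcal{Y}_N^*$ of size $N \sim \mathrm{Bin}(n_1, 1/(1+\eps))$ is $f$-distributed. Now the inequality $h \leq f/(1-\eps)$ plays the role of $f \leq (1+\eps)h$, yielding $V(\mathcal{Y}_{n_1}) \leq (1-\eps)^{-1} V(\mathcal{Y}_N^*)$; combined with sample monotonicity on the event $\{N \geq n\}$, and Chebyshev on the binomial with the ceiling $n_1 = \lceil n(1+2\eps)\rceil$, this produces the correction factor and yields \eqref{lem1eq2}.

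The main delicate point is bookkeeping: one must track constants through two rescalings, a random thinning, and floor/ceiling corrections, in a way that cleanly separates the multiplicative factors $(1\pm\eps)^{1/d}$ on $\Delta$ (equivalently $(1\pm\eps)$ on $V$) from the additive $\pm n^{-1}$ that absorbs the rounding to $n_0$ and $n_1$. The underlying geometric and probabilistic ingredients (mixture coupling, monotonicity of $\Delta$ under removing points, and Chebyshev on a binomial) are individually elementary; getting the four constants on the right-hand sides to line up simultaneously is the only subtle part.
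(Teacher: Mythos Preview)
Your proposal is correct and follows essentially the same approach as the paper: the identical mixture decompositions $f=(1-\eps)h+\eps g$ and $h=\tfrac{1}{1+\eps}f+\tfrac{\eps}{1+\eps}g'$, the same density comparison $f\leq(1+\eps)h$ (resp.\ $h\leq f/(1-\eps)$) to relate the two versions of $\Delta$, the same monotonicity under thinning, and Chebyshev on the binomial count with the floor/ceiling absorbed into $w_{n,0}$, $w_{n,1}$. In fact you spell out the argument for \eqref{lem1eq2} in slightly more detail than the paper, which only sketches it.
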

\subsection{Uniform mixture on disjoint supports}

\begin{proposition} \label{theounif} Let $E_1,\dots,E_k$ be subsets of $\mathbb{R}^d$ such that for all $i\neq j \Rightarrow \overline{E_i}\cap\overline{E_j}=\emptyset$,
	and $0<|E_i|<\infty$ for all $i$. Let $\aleph_n=\{X_1,\dots,X_n\}$ be iid random vectors in $S=\cup_i E_i$ with 
	density
	$$f(x)=\sum_{i=1}^k p_i\ind_{E_i}(x),$$
	where $p_1,\ldots, p_k$ are positive real numbers. Then,
	$$U(\aleph_n)\deb U\quad \text{when } n\rightarrow \infty.$$
\begin{proof} First let us introduce some notation,
for $i=1,\dots,k$
	\begin{itemize}
		\item $N_i=\#\{\aleph_n\cap E_i\}$ denotes the number of data  points in $E_i$. Notice that $N_i\sim Bin(n,p_i|E_i|)$.
		\item $\aleph^i_{N_i}=\{X_{i_1},\dots,X_{i_{N_i}}\}$ denotes the subsample of $\aleph_n$ that falls in $E_i$. 
		Observe that they all are uniformly distributed.
		\item $a_i=p_i|E_i|$ for $i=1,\dots,k$, that fulfils $\sum a_i=1$, $a_0=\min_i a_i$, $A_0=\max_i a_i$  and $C=\sum \frac{1-a_i}{a_i}$.
		\item $\eps_{n,i}=\frac{N_i-a_in}{na_i}$.
	\end{itemize}
	Since the support of $f$ is $\cup_i\overline{E_i}$, and
	by assumption  $i\neq j \Rightarrow \overline{E_i}\cap
	\overline{E_j}=\emptyset$, we have 
	$$\Delta(\aleph_n)=\sup\Big\{r: \ \exists x \exists i \text{ such that } x+\frac{r}{p_i^{1/d}}A\subset E_i\setminus \aleph_n\Big\},$$
	so
	\begin{equation} \label{eq1}
	\Delta(\aleph_n)=\max_i\sup\Big\{r: \  \exists x  \text{ such that
	} x+\frac{r|E_i|^{1/d}}{(|E_i|p_i)^{1/d}}A\subset E_i \setminus
	\aleph^i_{N_i}\Big\},
	\end{equation}
	while
	\begin{equation} \label{eq2}
	\Delta(\aleph^i_{N_i})=\sup\Big\{r': \ \exists x \in E_i\text{
		such that } x+r'|E_i|^{1/d}A\subset E_i\setminus
	\aleph^i_{N_i}\Big\}.
	\end{equation}
	From (\ref{eq1}) and (\ref{eq2}) we derive that
	$$\Delta(\aleph_n)= \max_i\Big\{(|E_i|p_i)^{1/d}\Delta(\aleph^i_{N_i})\Big\} \text { and } V(\aleph_n)=\max_i\Big\{|E_i|p_iV(\aleph^i_{N_i})\Big\},$$
	which entails that
	\begin{equation*}
	\pr(n V(\aleph_n)\leq w)=\prod_i\pr\left(N_i V(\aleph^i_{N_i})\leq \frac{wN_i}{a_in}\right).
	\end{equation*}
	Let  $\pr^{\overrightarrow{n}}(A)=\pr(A|N_1=n_1,\dots,N_k=n_k)$ stand for the conditional
	probability given the number of points that fall in each $E_i$.    We
	have that,
	$$\pr^{\overrightarrow{n}}\big(n V(\aleph_n)\leq w\big)= \prod_{i=1}^k \pr^{\overrightarrow{n}}\big(n|E_i|p_i
	V(\aleph^i_{n_i})\leq w\big)= \prod_{i=1}^k
	\pr^{\overrightarrow{n}}\left(n_iV(\aleph^i_{n_i})\leq
	\frac{wn_i}{n|E_i|p_i}\right).$$
	Now, taking $w_{n,i}=\frac{wn_i}{n|E_i|p_i}$, $\gamma_{n,i}=\frac{n_i
		w_{n,i}^{d-1}e^{-w_{n,i}}}{|E_i|}$ and applying Lemma \ref{lem0} we obtain,
	$$ \exp\bigg(-\sum_{i=1}^k \gamma_{n,i} a_+^{E_i} |E_i| \bigg)\leq \pr^{\overrightarrow{n}}\big(n V(\aleph_n)\leq w\big)\leq \exp\bigg(-\sum_{i=1}^k \gamma_{n,i} a_-^{E_i} |E_i| \bigg).$$
	On the other hand,
	\begin{align*}
	\sum_{i=1}^k \gamma_{n,i} a_+^{E_i} |E_i|=&\sum_{i=1}^k n_i\left(\frac{wn_i}{n|E_i|p_i} \right)^{d-1}\exp \left(-\frac{wn_i}{n|E_i|p_i} \right)a_+^{E_i}\\
	=&\sum_{i=1}^k n_i w^{d-1}(1+\eps_i)^{d-1}\exp (-w(1+\eps_i))a_+^{E_i}(w_{n,i},n_i).
	\end{align*}
	Let  $\eps_n=\max_i |\eps_{n,i}|$ and $\eps_{a_{+}}=\max_i \frac{|a_+^{E_i}(w_{n,i},n_i)-\alpha_A|}{\alpha_A}$, then we have
	
	\begin{equation*}
	\sum_{i=1}^k \gamma_{n,i} a_+^{E_i} |E_i| \leq nw^{d-1}\exp(-w)\alpha_A (1+\eps_n)^{d-1} \exp(w\eps)(1+\eps_{a_{+}}).
	\end{equation*}
	Taking $w=w_n=x+\log(n) +(d-1)\log(\log(n)) +\log (\alpha_A)$, we obtain
	that $nV\leq w \Leftrightarrow U\leq x$, which implies that
	\begin{equation*}
	\pr^{\overrightarrow{n}}\big(U(\aleph_n)\leq x\big) \geq \exp \left(-e^{-x}(1+\eps)^{d-1} \exp(\log(n) \eps_n)(1+\eps_{a_{+}})(1+o_n(1))\right).
	\end{equation*}
	In the same way it can be proved,
	\begin{equation*}
	\pr^{\overrightarrow{n}}\big(U(\aleph_n)\leq x\big) \leq \exp \left(-e^{-x}(1-\eps_n)^{d-1} \exp(-\log(n) \eps_n)(1-\eps_{a_{-}})(1+o_n(1))\right).
	\end{equation*}
	where we denoted
	$\eps_{a_{-}}=\max_i \frac{|a_-^{E_i}(w_{n,i},n_i)-\alpha_A|}{\alpha_A}$.

	Suppose that $\eps_n=\max |\eps_{n,i}|\leq 1/\log(n)^2$, then, if $n\geq 5$, $a_0
	n/2\leq N_i\leq n$ for all $i$, which imply that for all $i$, 
	$w_{n,i}\geq \log(n) a_0/(2A_0)\rightarrow \infty$ and $w_{n,i}/n\leq \big(x+\log(n) +(d-1)\log(\log(n))+\log(\alpha_A)\big)/(na_0)\rightarrow 0$.
	Then $\eps_{a_-}$ and $\eps_{a_+}$ converges to $0$, 
	according to Lemma \ref{lem1}. Therefore 
	\begin{equation} \label{inter1}
	\pr^{\eps_n\leq 1/\log(n)^2}\big(U(\aleph_n)\leq x\big) \rightarrow  \exp (-\exp(-x))\ \ \text{ when } n\rightarrow \infty.
	\end{equation}
	Since 
	\begin{equation*}
	\pr\Big(\max_i|\eps_{n,i}|\geq
	\frac{1}{\log(n)^2}\Big)=\pr\Big(\bigcup_i |\eps_{n,i}|\geq
	\frac{1}{\log(n)^2}\Big) \leq \sum_{i=1}^k \pr\Big(|\eps_{n,i}|\geq
	\frac{1}{\log(n)^2}\Big),
	\end{equation*}
	from Chebyshev's inequality we obtain
	$$\pr \Big(|\eps_{n,i}|\geq \frac{1}{\log(n)^2}\Big)\leq \log(n)^4 \V(\eps_{n,i}^2)\quad \text{ where } \quad  \V(\eps_{n,i}^2)=\frac{1-a_i}{na_i},$$
	and therefore
	\begin{equation} \label{inter2}
	\pr\Big(\eps_n\geq \frac{1}{\log(n)^2}\Big)\leq C\frac{\big(\log(n)\big)^4}{n}.
	\end{equation}
	Finally, from equations (\ref{inter1}) and (\ref{inter2}) we get,
	$$\pr\big(U(\aleph_n)\leq x\big) \rightarrow  \exp (-\exp(-x))\text{ when } n\rightarrow \infty,$$
	which concludes the proof.
\end{proof}
\end{proposition}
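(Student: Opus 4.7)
The plan is to reduce the piecewise-constant case to the uniform case on each $E_i$, then stitch the pieces together via conditioning on the counts $N_i=\#(\aleph_n\cap E_i)$.

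First, I would exploit the key geometric fact that since $\overline{E_i}\cap\overline{E_j}=\emptyset$ for $i\neq j$, any sufficiently small convex body $x+(r/f(x)^{1/d})A$ that fits inside $S\setminus\aleph_n$ must lie entirely in a single $E_i$ (for a center $x\in E_i$, where $f(x)=p_i$). This yields the clean decomposition
$$V(\aleph_n)=\max_{1\le i\le k}\,p_i\,V^{*}(\aleph^i_{N_i}),$$
where $\aleph^i_{N_i}$ is the subsample in $E_i$ and $V^{*}$ is the Janson (Lebesgue-based) maximal-spacing. Set $a_i=p_i|E_i|$, so that $N_i\sim\mathrm{Bin}(n,a_i)$ and $\sum a_i=1$, and write $\eps_{n,i}=(N_i-a_in)/(na_i)$.

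Next I would condition on $(N_1,\dots,N_k)=\vec n$. Conditionally, the $\aleph^i_{n_i}$ are independent uniform samples on $E_i$, so
$$\pr^{\vec n}\bigl(nV(\aleph_n)\le w\bigr)=\prod_{i=1}^k\pr^{\vec n}\!\left(n_iV^{*}(\aleph^i_{n_i})\le w_{n,i}\right),\qquad w_{n,i}=\frac{w\,n_i}{n\,a_i}.$$
Lemma~\ref{lem0} sandwiches each factor by $\exp(-\gamma_{n,i}a^{E_i}_{\pm}|E_i|)$ with $\gamma_{n,i}=\tfrac{n_i}{|E_i|}w_{n,i}^{d-1}e^{-w_{n,i}}$.

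I would then substitute $w=w_n=x+\log n+(d-1)\log\log n+\log\alpha_A$, so that $nV(\aleph_n)\le w_n$ is equivalent to $U(\aleph_n)\le x$. Using $w_{n,i}=w_n(1+\eps_{n,i})$ and collecting terms,
$$\sum_i\gamma_{n,i}a^{E_i}_{\pm}|E_i|= e^{-x}\,(1+o(1))\qquad\text{provided}\qquad \max_i|\eps_{n,i}|=o(1/\log n).$$
On that "good event" each $w_{n,i}\to\infty$ with $w_{n,i}/n_i\to 0$, so Lemma~\ref{lem0} gives $a^{E_i}_{\pm}(w_{n,i},n_i)\to\alpha_A$, while the factor $\exp(\eps_{n,i}\log n)$ tends to $1$; both the upper and lower sandwich bounds collapse to $\exp(-e^{-x})$.

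The last step removes the conditioning: since $\mathrm{Var}(\eps_{n,i})=(1-a_i)/(na_i)$, Chebyshev's inequality gives $\pr(\max_i|\eps_{n,i}|\ge 1/\log^2 n)=O(\log^4 n/n)\to 0$, so the conditional convergence on the good event transfers to the unconditional one. I expect the main obstacle to be the bookkeeping in the exponents: one must show that both the $\pm$ branches of Lemma~\ref{lem0}, the $(1+\eps_{n,i})^{d-1}$ correction, the dangerous $\exp(\eps_{n,i}\log n)$ factor, and the convergence of $a^{E_i}_{\pm}\to\alpha_A$ all contribute only multiplicative $1+o(1)$ corrections uniformly on the good event, so that the Gumbel limit emerges from the product.
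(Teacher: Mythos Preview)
Your proposal is correct and follows essentially the same route as the paper: the same decomposition $V(\aleph_n)=\max_i a_iV(\aleph^i_{N_i})$ via disjointness of the $\overline{E_i}$, conditioning on the counts $N_i$, the two-sided bound from Lemma~\ref{lem0}, the substitution $w_n=x+\log n+(d-1)\log\log n+\log\alpha_A$, and the same good event $\{\max_i|\eps_{n,i}|\le 1/\log^2 n\}$ handled by Chebyshev. The only cosmetic difference is that the decomposition holds for \emph{all} $r$ (by connectedness of $x+rA$ and disjointness of the closures), not just sufficiently small ones, but this does not affect the argument.
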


\subsection{Uniform mixture}
\begin{proposition} \label{theounif2}
	Let $E_1,\dots,E_k$ be subsets of $\mathbb{R}^d$ such that,
	\begin{itemize}
		\item[1)] $i\neq j \Rightarrow |\overline{E_i}\cap\overline{E_j}|=0$.
		\item[2)] $0<|E_i|<\infty$ for $i=1,\dots,k$.
		\item[3)] There exists a constant $C>0$ such that, for all $i$, for all $\eps>0$, $N(\partial E_i,\eps)\leq C\eps^{-d+1}$.
	\end{itemize}
	Let $\aleph_n=\{X_1,\dots,X_n\}$ be iid random vectors with density 
	$$f(x)=\sum_{i=1}^k p_i \ind_{\mathring{E_i}},$$
	where $p_1,\dots,p_k$ are positive real numbers. If there exist constants $r_0>0$ and $c>1-1/d$ such that,
	for all $r\leq r_0$ and all $x\in \cup_i\mathring{E_i}$,
	\begin{equation*}
	\frac{\min_{t\in S\cap B(x,r)}f(t)}{\max_{t\in S\cap B(x,r)}f(t)}\geq c,
	\end{equation*}
	then,
	$$U(\aleph_n)\deb U\quad \text{ when } \quad n\rightarrow \infty.$$
\begin{proof} We start by introducing some definitions and
	notation. 
	\begin{align*}
	\mathring{\Delta}(\aleph_n)\ =&\ \sup \Big\{r: \ \exists x \exists i, \ \text{ such that } x+\frac{r}{f(x)^{1/d}}A\subset \mathring{E_i}\setminus \aleph_n \Big\},\\
	\mathring{V}(\aleph_n)\ = &\ \mathring{\Delta}^d(\aleph_n),\\
	\mathring{U}(\aleph_n)\ = & \ n\mathring{V}(\aleph_n)-\log(n)-(d-1)\log\big(\log(n)\big)-\log(\alpha_A)
	\end{align*}
	
	With the same ideas used to prove Proposition 
	\ref{theounif} (and the fact that $|E_i|=|\mathring{E}_i|$) it follows that 
	$\mathring{U}\big(\aleph_n\big)\deb U$.
	Let $F_n(x)=\pr\big(\mathring{U}\big(\aleph_n\big)\leq x\big)$.
	Clearly $U(\aleph_n)\geq \mathring{U}\big(\aleph_n\big)$, 
	and therefore 
	\begin{equation}\label{eqU0}
	\pr\big( U(\aleph_n)\leq x\big)\leq F_n(x)\rightarrow \exp(-\exp(-x)).
	\end{equation}
	
	In order to prove the other inequality let us define
	\begin{itemize}
		\item $G=\bigcup_{i,j} \big(\overline{E_i}\cap\overline{E_j}\big)$.
		\item $p_0=\min_{i}p_i$.
		\item $\rho_A=\max_{x\in A}\|x\|$.
		\item $\rho_n=(r_f \rho_A/p_0^{1/d}) \big(\log(n)/n\big)^{1/d}$ with $r_f$ such
		that $\Delta(\aleph_n)\leq r_f\big(\log(n)/n\big)^{1/d}$
		eventually almost surely (whose existence follows from Lemma \ref{lem01}). Notice that condition 3) ensures that 
		$N(G^{\rho_n},n^{-1/d})=\mathcal{O}(n^{1-1/d}(\log(n))^{1/d}).$
		\item $\displaystyle \Delta\big(\aleph_n, S\setminus G^{\rho_n}\big)=\sup\Big\{r: \ \exists x\in S\setminus G^{\rho_n} \text{ such that } x+\frac{r}{f(x)^{1/d}}A\subset S\setminus \aleph_n \Big\}$.
		\item $\displaystyle \Delta\big(\aleph_n,  G^{\rho_n}\big)=\sup\Big\{r: \ \exists x\in G^{\rho_n} \text{ such that } x+\frac{r}{f(x)^{1/d}}A\subset S\setminus \aleph_n \Big\}$.
	\end{itemize}
	Clearly we have that,
	\begin{equation} \label{ineqdelta0}
	\Delta(\aleph_n)=\max\Big\{\Delta\big(\aleph_n, S\setminus G^{\rho_n}\big),\Delta\big(\aleph_n,  G^{\rho_n}\big)\Big\}.
	\end{equation}
	
	For the chosen $\rho_n$, we are going to prove that,
	\begin{equation} \label{ineqdelta1}
	\mathring{\Delta}\big(\aleph_n\big) \geq \Delta\big(\aleph_n, S\setminus G^{\rho_n}\big) \text{ eventually almost surely.}
	\end{equation}
	Let us suppose first that  $\Delta\big(\aleph_n, S\setminus G^{\rho_n}\big)\leq r_f (\log(n)/n)^{1/d}$ (which holds, e.a.s., due to Lemma \ref{lem01}) then
	$$\text{for all } \eps>0 \text{ there exists }  x_{\eps}\in S\setminus G^{\rho_n} \text{ such that } x_{\eps}+\frac{\Delta\big(\aleph_n, S\setminus G^{\rho_n}\big)-\eps}{f(x_{\eps})^{1/d}}A\subset S\setminus\aleph_n,$$
	and
	$$x_{\eps}+\frac{\Delta\big(\aleph_n, S\setminus G^{\rho_n}\big)-\eps}{f(x_{\eps})^{1/d}}A\subset
	\mathcal{B}\bigg(x_{\eps},\rho_A\frac{r_f(\log(n)/n)^{1/d}-\eps}{p_0^{1/d}}\bigg)\subset \mathcal{B}(x_{\eps},\rho_n).$$
	
	From $d(x_{\eps},G)\geq\rho_n$ we
	get
	$x_{\eps}+\frac{\Delta\big(\aleph_n, S\setminus G^{\rho_n}\big)-\eps}{f(x_{\eps})^{1/d}}A\subset \bigcup_i \mathring{E}_i \setminus\aleph_n.$
	Then, for all $\eps>0$,
	$\mathring{\Delta}\big(\aleph_n\big)\geq \Delta\big(\aleph_n,
	S\setminus G^{\rho_n}\big)-\eps$ e.a.s., which concludes the proof of \eqref{ineqdelta1}.
	
	By \eqref{ineqdelta0}, introducing $U(\aleph_n,G^{\rho_n})=n\Delta(\aleph_n,G^{\rho_n})^d-\log(n)-(d-1)\log(\log(n))-\log(\alpha_A)$, 
	one can bound $\pr(U(\aleph_n)\geq x)$ for all $x$, as follows:  
	$$ \left \{
	\begin{array}{l l}
	\pr(U(\aleph_n)\geq x)\leq \pr(\mathring{U}(\aleph_n)\geq x) + \pr(U(\aleph_n,G^{\rho_n})\geq -\log(\log(n))) & \text{ if } x\geq -\log(\log(n)) \\
	\pr(U(\aleph_n)\geq x)\leq 1 & \text{ if } x\leq -\log(\log(n))\\
	\end{array}
	\right.
	$$
	By Lemma \ref{lem02} we obtain:
	\begin{equation}\label{eqU1}
	\left \{
	\begin{array}{l l}
	\pr(U(\aleph_n)\leq x)\geq F_n(x)+o(1) & \text{ if } x\geq -\log(\log(n)) \\
	\pr(U(\aleph_n)\leq x)\geq 0 & \text{ if } x\leq -\log(\log(n)).\\
	\end{array}
	\right.
	\end{equation}
	
	Finally using \eqref{eqU0}, \eqref{eqU1} and that  $\mathbb{P}\big(U\leq -\log(\log(n))\big)=1/n\rightarrow 0$ we conclude the proof.
\end{proof}
\end{proposition}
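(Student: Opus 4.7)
The plan is to reduce this proposition to Proposition \ref{theounif} (the case where the closures are pairwise disjoint) by replacing each $E_i$ with its interior $\mathring{E}_i$, and then to show that the error introduced by this replacement, which is concentrated near the shared boundaries $G=\bigcup_{i\neq j}(\overline{E}_i\cap \overline{E}_j)$, is asymptotically negligible. To that end, introduce the interior version
$$\mathring{\Delta}(\aleph_n)=\sup\Big\{r:\exists x,\,\exists i\text{ with }x+\tfrac{r}{f(x)^{1/d}}A\subset\mathring{E}_i\setminus\aleph_n\Big\},$$
together with $\mathring{V}=\mathring{\Delta}^d$ and $\mathring{U}=n\mathring{V}-\log n-(d-1)\log\log n-\log\alpha_A$. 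Condition 3) forces $|\partial E_i|=0$, so $|\mathring{E}_i|=|E_i|$ and the $\mathring{E}_i$ are pairwise disjoint; Proposition \ref{theounif} applied to $\{\mathring{E}_i\}$ therefore yields $\mathring{U}(\aleph_n)\deb U$. Because any set fitting inside some $\mathring{E}_i\setminus\aleph_n$ automatically fits inside $S\setminus\aleph_n$, we have $\Delta(\aleph_n)\geq\mathring{\Delta}(\aleph_n)$, and consequently
$$\pr\big(U(\aleph_n)\leq x\big)\leq \pr\big(\mathring{U}(\aleph_n)\leq x\big)\longrightarrow\exp(-\exp(-x)),$$
which delivers one of the two asymptotic inequalities.

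For the reverse inequality I split the supremum in $\Delta(\aleph_n)$ according to proximity of its center to $G$. Set
$$\rho_n=\frac{r_f\rho_A}{p_0^{1/d}}\Big(\frac{\log n}{n}\Big)^{1/d},$$
where $p_0=\min_i p_i$, $\rho_A=\max_{x\in A}\|x\|$, and $r_f$ comes from Lemma \ref{lem01} applied with $f_0=p_0$ and $f_1=\max_i p_i$. Lemma \ref{lem01} then guarantees that, eventually almost surely, every admissible set $x+rf(x)^{-1/d}A$ in the supremum has diameter at most $2\rho_n$; if such an $x$ lies outside $G^{\rho_n}$, that set is contained in a single $\mathring{E}_i$ and so contributes to $\mathring{\Delta}(\aleph_n)$. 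Writing $\Delta(\aleph_n,T)$ for the supremum constrained to centers in $T$, we obtain
$$\Delta(\aleph_n)=\max\big\{\Delta(\aleph_n,S\setminus G^{\rho_n}),\,\Delta(\aleph_n,G^{\rho_n})\big\},\qquad \Delta(\aleph_n,S\setminus G^{\rho_n})\leq\mathring{\Delta}(\aleph_n)\ \text{e.a.s.}$$
Since $G\subset\bigcup_i\partial E_i$, condition 3) combined with a standard covering argument yields $N(G^{\rho_n},n^{-1/d})=O(n\rho_n)=O(n^{1-1/d}(\log n)^{1/d})$, so Lemma \ref{lem02} applied with $a=1/d$ and $b=1/d$ gives $\pr\big(U(\aleph_n,G^{\rho_n})\geq-\log(\log n)\big)\to 0$. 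For $x\geq-\log(\log n)$ this delivers
$$\pr(U(\aleph_n)\geq x)\leq \pr(\mathring{U}(\aleph_n)\geq x)+\pr\big(U(\aleph_n,G^{\rho_n})\geq-\log(\log n)\big),$$
while for $x\leq-\log(\log n)$ the trivial estimate $\exp(-\exp(-x))\leq 1/n\to 0$ suffices. The resulting two-sided squeeze is the desired $\pr(U(\aleph_n)\leq x)\to\exp(-\exp(-x))$.

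The principal obstacle is to verify that the density-ratio hypothesis $c>1-1/d$ is precisely what is needed to legitimise the use of Lemma \ref{lem02} for the discontinuous $f$ at hand. Inspecting that lemma's proof, continuity of $f$ is used only to replace the ratio $\min f/\max f$ over vanishing balls by a constant arbitrarily close to $1$; in the piecewise-constant setting this ratio is bounded below by the hypothesised $c$, and the key tail estimate then becomes of order $n^{1-a-c}(\log n)^{b-c(d-2)}$. With $a=1/d$ supplied by the covering bound on $G^{\rho_n}$, this tends to zero exactly when $c>1-1/d$, which is why the proposition imposes this threshold. Carefully tracking this quantitative matching through the proof of Lemma \ref{lem02} is the only delicate point of the argument; everything else is bookkeeping built on the ingredients already provided.
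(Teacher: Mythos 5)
Your proof follows the paper's argument essentially step for step (the interior-constrained statistic $\mathring{U}$, the split of $\Delta(\aleph_n)$ at $G^{\rho_n}$ with $\rho_n$ calibrated via Lemma \ref{lem01}, the covering bound $N(G^{\rho_n},n^{-1/d})=\mathcal{O}(n^{1-1/d}(\log n)^{1/d})$ from condition 3), and Lemma \ref{lem02} for the near-boundary part), and your closing explanation that $c>1-1/d$ is exactly the threshold needed to rerun Lemma \ref{lem02} for the discontinuous piecewise-constant density is a point the paper leaves implicit. The only slight imprecision is the claim that Proposition \ref{theounif} can be applied verbatim to $\{\mathring{E}_i\}$: their closures may still intersect, so (as the paper does) one should instead apply the \emph{proof} of Proposition \ref{theounif} to the interior-constrained statistic $\mathring{U}$, for which the per-piece maximum decomposition holds by definition.
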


\subsection*{Proof of Theorem \ref{theocont}}

	Let $c_n=(\log(n)/n)^{\frac{1}{3d}}$. Take a ``mesh'' of $\R{d}$ with small squares of side $c_n$,
	$$\prod_{i=1}^d\Big[k_ic_n\ ,(k_i+1)c_n\Big]\quad \text{ with } k_i\in \mathbb{N},$$
	and denote by $m_n\leq |S|c_n^{-d}$ the number of  these squares $\{C_1,\dots,C_{m_n}\}$ that are included in $S$.

	Like in the proof of Proposition \ref{theounif2} let us denote,
	\begin{align*}
	\mathring{\Delta}(\aleph_n)= & \ \sup \Big\{r: \ \exists x \exists i, \ \text{ such that } x+\frac{r}{f(x)^{1/d}}A\subset \mathring{C_i}\setminus \aleph_n \Big\},\\
	\mathring{V}(\aleph_n)= & \ \mathring{\Delta}^d(\aleph_n),\\
	\mathring{U}(\aleph_n)= & \ n \mathring{V}(\aleph_n)-\log(n)-(d-1)\log\big(\log(n)\big)-\log(\alpha_A).
	\end{align*}
	From the inclusion $\bigcup_{i=1}^{m_n} \mathring{C}_i\subset S$ it follows that, $\pr\big(U(\aleph_n)\leq x\big)\leq \pr\Big(\mathring{U}(\aleph_n)\leq x\Big)$.\\
	Like in the proof of Proposition \ref{theounif} let us  denote, for $i=1,\dots,m_n$.
	\begin{itemize}
		\item $N_i=\#\{\aleph_n\cap C_i\}$, 
		\item $a_i=\int_{C_i} f(t)dt$; $a_0=\min_i a_i$; $A_0=\max_i a_i$ and $C=\sum \frac{1-a_i}{a_i}$. Observe that $\sum a_i=1$ and $a_0\geq f_0c_n^d$.
		\item $\aleph^i_{N_i}=\{X_{i_1},\dots,X_{i_{N_i}}\}$, the subsample of $\aleph_n$ that belongs to $C_i$.
		Observe that $X_{i_j}$ for $j=1,\dots,N_i$ has density $f_i(x)=\big(f(x)/a_i\big)\ind_{C_i}(x)$.
		\item $\eps_{n,i}=\frac{N_i-a_in}{na_i}$, $\eps_n=\max_{i} \eps_i$.
	\end{itemize}
	We start with some asymptotic properties about $\eps_{n,i}$ and $\eps_n$. If we bound $|a_i|\geq f_0|c_n|^{d}$ and apply Hoeffding's 
	inequality we get
	
	$$\pr(\log(n) |\eps_{n,i}|\geq t )\leq 2 \exp\Big(-2t^2 f_0^2 (\log(n))^{-4/3}n^{1/3}\Big),$$
	and,
	$$\pr(\log(n) |\eps_n|\geq t )\leq \frac{2 |S| n^{1/3}}{(\log n)^{1/3}} \exp\Big(-2t^2 f_0^2 (\log(n))^{-4/3}n^{1/3}\Big).$$
	Borel-Cantelli Lemma entails $(\log(n))\eps_n \stackrel{a.s.}{\rightarrow} 0.$
	Then, with probability $1$, for $n$ large enough,
	
	\begin{equation}\label{boundNi}
	\frac{f_0}{2}(\log n)^{1/3}n^{2/3}\leq N_i \leq 2f_1 (\log n)^{1/3}n^{2/3} \quad \text{ for }i=1,\dots,m_n.
	\end{equation}
	In what follows $n$ is large enough so that \eqref{boundNi} is fulfilled.

	Proceeding exactly as in the proof of Proposition \ref{theounif} we can derive that
	
	\begin{equation*}
	\mathring{\Delta}(\aleph_n)=\max_i\sup\Big\{r: \ \exists x \text{ such that } x+\frac{ra_i^{1/d}}{(a_if_i(x))^{1/d}}A\subset \mathring{C}_i\setminus \aleph^i_{N_i}\Big\},
	\end{equation*}
	and therefore
	$$\mathring{\Delta}(\aleph_n)= \max_i\Big\{a_i^{1/d}\Delta(\aleph^i_{N_i})\Big\} \quad \text{ and } \quad V(\aleph_n)=\max_i\Big\{a_iV(\aleph^i_{N_i})\Big\}.$$

	First we to bound $\pr(U_n(\aleph_n)\geq x)$ from above. As in Proposition \ref{theounif} 
	\begin{equation}\label{t5i1}
	\pr\big(nV(\aleph_n)\leq w_n\big)\leq \pr\big(n\mathring{V}(\aleph_n)\leq w_n\big)=\prod_{i=1}^{m_n} \pr\left(N_i\Delta^d(\aleph_{N_i})\leq \frac{w_nN_i}{a_in}\right).
	\end{equation}

	At any of the small squares $C_i$, by H\"{o}lder continuity, the density is close to the uniform density, that will 
	allow us to apply  Lemma \ref{lem1} with $h=1/|C_i|\ind_{C_i}$.
	More precisely: for all $i$ an for all $y\in C_i$,
	
	$$\Big|f_i(y)|C_i|-1\Big|=\Big|\frac{f(y)}{a_i}|C_i|-1\Big|=
	\frac{1}{a_i}\Big|\int_{C_i}f(y)dt-\int_{C_i}f(t)dt\Big|\leq \frac{1}{a_i}K_f\int_{C_i}|y-t|^{\beta}dt.
	$$
	Since $|y-t|\leq \sqrt{d}c_n$, if we denote $A_f=K_ff_0^{-1}d^{\beta/2}$ we derive that
	\begin{equation*}
	\Big|f_i(y)|C_i|-1\Big|\leq \frac{1}{a_i}K_f\sqrt{d}^{\beta}c_n^{d+\beta}\leq  A_fc_n^{\beta} \quad \forall y\in C_i.
	\end{equation*}
	
	Let $N_i'=\lceil N_i(1+2 A_fc_n^{\beta})
	\rceil$, $w_n'=w_n\frac{1- A_fc_n^{\beta}}{1+ 2A_fc_n^{\beta}}$ and $\mathcal{Y}_{N_i'}$
	a sample of $N_i'$ variables uniformly drawn on $C_i$, then Lemma \ref{lem1} implies 
	that
	\begin{equation}\label {lem1ap1}
	\pr\left(N_i\Delta^d(\aleph_{N_i})\leq \frac{w_nN_i}{a_in}\right)\leq \pr\left(N_i'\Delta^d(\mathcal{Y}_{N'_i})\leq \frac{w_n'N_i'}{a_in} \right)
	\left(1-\frac{1+2 A_fc_n^{\beta}+N_i^{-1}}{(N_i A_fc_n^{\beta})(1+ A_fc_n^{\beta})} \right)^{-1}.
	\end{equation}
	On the other hand, by \eqref{boundNi}, with probability one, for $n$ large enough we have that,
	\begin{equation} \label{auxeq1}
	\left(1-\frac{1+2 A_fc_n^{\beta}+N_i^{-1}}{(N_i A_fc_n^{\beta})(1+ A_fc_n^{\beta})} \right)^{-1} \leq \left(1-\frac{2}{f_0A_f} \frac{1}{(\log n)^{1/3+\beta/3d} n^{2/3-\beta/3d}(1+o(1))} \right)^{-1} \quad \text{ for all }i,
	\end{equation}
	and, 
	\begin{equation} \label{auxeq2}
	\left(1-\frac{1+2 A_fc_n^{\beta}+N_i^{-1}}{(N_i A_fc_n^{\beta})(1+ A_fc_n^{\beta})} \right)^{-1} \geq \left(1-\frac{1}{2f_1A_f} \frac{1+o(1)}{(\log n)^{1/3+\beta/3d} n^{2/3-\beta/3d}} \right)^{-1} \quad\text{ for all }i.
	\end{equation}
	Let us prove that,
	\begin{equation} \label{auxeq3}
	\prod_{i=1}^{m_n}  \left(1-\frac{1+2 A_fc_n^{\beta}+N_i^{-1}}{(N_i A_fc_n^{\beta})(1+ A_fc_n^{\beta})} \right)^{-1} \stackrel{a.s.}\rightarrow 1.
	\end{equation}
	
	Since the right hand side of \eqref{auxeq1} can be express, for $n$ large enough, as
	$$\exp\Big(-C (\log n)^{1/3+\beta/3d} n^{2/3-\beta/3d}(1+o(1))\Big),$$
	being $C$ a positive constant, we get, for $n$ large enough,
	\begin{equation*}
	\prod_{i=1}^{m_n}  \left(1-\frac{1+2 A_fc_n^{\beta}+N_i^{-1}}{(N_i A_fc_n^{\beta})(1+ A_fc_n^{\beta})} \right)^{-1} \leq 
	\exp\Big(-Cm_n (\log n)^{1/3+\beta/3d} n^{2/3-\beta/3d}(1+o(1))\Big)\rightarrow 1,
	\end{equation*}
	where the limit follows from $\big| m_n (\log n)^{-1/3-\beta/3d} n^{-2/3+\beta/3d}\big|\leq (\log n)^{-\beta/3d} n^{-1/3+\beta/3d} \rightarrow 0$.
	\eqref{auxeq3} is obtained doing the same with \eqref{auxeq2}.
	
	Now let us study the asymptotic behaviour of $\prod_{i=1}^{m_n} \pr\left(N_i'\Delta^d(\mathcal{Y}_{N'_i})\leq \frac{w'_nN_i'}{a_in} \right)$. If we apply Lemma \ref{lem0}, (observe that the functions $a_-^S$ only depends on the shape of $S$) we get,
	$$
	\prod_{i=1}^{m_n} \pr\left(N_i'\Delta^d(\mathcal{Y}_{N'_i})\leq \frac{w_n'N_i'}{a_in} \right)\leq 
	\exp \left( -\sum_{i=1}^{m_n} N_i' \left( \frac{N_i'\omega'_n}{a_in}\right)^{d-1}\exp\left(-\frac{N_i'\omega'_n}{a_in}\right)a_-^{[0,1]^d}\left( \frac{N_i'\omega'_n}{a_in},N_i'\right) \right).
	$$
	
	Let, $\eps_{n,i}'=\frac{N_i}{a_in}\frac{w_n'}{w_n}-1$ for $i=1,\dots,m_n$. Observe that $\eps_{n,i}'=(1+\eps_{n,i})\frac{w_n'}{w_n}-1=(1+\eps_{n,i})\frac{1-A_fc_n^{\beta}}{1+2A_fc_n^{\beta}}-1$ and 
	$\eps_n'=\max_i|\eps_{n,i}'|$. Since $(\log(n))\eps_n \stackrel{a.s.}{\rightarrow} 0$, $\eps_n'$  fulfils
	$\log(n)\eps_n'\stackrel{a.s.}{\rightarrow}0$ and for all $i$, $N_i'\geq N_i$.
	The previous equation together with \eqref{t5i1} entails,
	
	\begin{equation}\label{t5i12}
	\pr(nV(\aleph_n)\leq w_n)\leq \exp\Big(-n\omega_n^{d-1}(1-\eps_n')^{d-1}\exp\big(-w_n(1+\eps_n')\big)a_-^{[0,1]^d}(\min\big(w_n(1-\eps_n'),\min_i N_i\big))\Big).
	\end{equation}
	
	If we choose $w_n=x+n\log(n)+(d-1)\log(\log(n))+\log(\alpha_A)$ in \eqref{t5i12} we get,
	\begin{equation}\label{t5i1fin}
	\pr(U(\aleph_n)\leq x)\leq \exp(-\exp(-x))+o(1).
	\end{equation}
	In order to conclude the proof of \eqref{conv0} we have to bound $\pr(U(\aleph_n)\leq x)$ from below. We provide just a sketch of the proof since the arguments are similar to those in Proposition \ref{theounif2}, using
	Lemma \ref{lem1} as in the proof of \eqref{t5i1fin}.
	Let us denote,
	
	\begin{itemize}
		\item $\rho_n=\frac{r_f \rho_A}{f_0^{1/d}}\big( \frac{\log(n)}{n}\big)^{1/d}$ with $\rho_A=\max_{x\in A}\|x\|$.
		\item $G_n=\cup_{i\neq j}^{m_n} (\overline{C_i}\cap \overline{C_j})$.
		
		\item $H_n=S\setminus (\cup_i^{m_n} C_i)$, notice that $H_n\subset \partial S^{c_n}$.
	\end{itemize}
	
	Proceeding as in Proposition \ref{theounif2} we have
	$$U(\aleph_n)\leq \max\Big\{\mathring{U}\big(\aleph_n\big), U(\aleph_n,G_n^{\rho_n}), U(\aleph_n,H_n)\Big\}\text{ eventually almost surely},$$
	and
	\begin{equation*}
	\pr(\mathring{U}(\aleph_n)\leq x)\geq \exp(-\exp(-x))+o(1).
	\end{equation*}
	
	Then, reasoning as in Proposition \ref{theounif2} we get
	\begin{equation*}
	\pr(U(\aleph_n)\leq x)\geq \exp(-\exp(-x))+o(1).
	\end{equation*}
	Finally, in order to conclude the proof of \eqref{conv0} it suffices 
	to prove that $G_n^{\rho_n}$ and $H_n$ satisfies the hypothesis of  Lemma \ref{lem02}.
	
	$G_n$ is the union of less than $m_n2^d$ $(d-1)-$dimensional cubes of size $c_n$. Each of them
	can be cover by less than $a_1 c_n^{d-1} \rho_n^{-d+1}$ balls of radius $\rho_n$ (with $a_1$ a positive constant), centered at some points $\{x^i_j\}_{i,j}$.
	Since  $G_n^{\rho_n}\subset
	\bigcup_{i,j} \mathcal{B}(x_j^i,2\rho_n)$, and every  $\mathcal{B}(x_j^i,2\rho_n)$ can be covered by 
	$a_2\rho_n^d n$ balls of radius $n^{-1/d}$,
	$G_n^{\rho_n}$ can be covered by less than $\nu_n=m_n a_1 c_n^{d-1}\rho_n^{-d+1}a_2\rho_n^d n=\mathcal{O}(n^{1-\frac{2}{3d}} (\log n)^{\frac{2}{3d}})$ 
	balls of radius $n^{-1/d}$.
	
	In the same way it can be proved that $H_n$ can be covered with $\mathcal{O}(n^{1-\frac{d+\kappa}{3d}}\log(n)^{\frac{d+\kappa}{3d}})$ 
	balls of radius $n^{-1/d}$. Indeed, cover $\partial S$ with $\mathcal{O}(c_n^{-\kappa})$ balls of radius $c_n$, apply triangular inequality
	to obtain that the union of the balls with the same centre but with a radius $c_n\sqrt{d}$ covers $\partial S^{\sqrt{d}c_n}$ and then covers $H_n$, finally cover
	every of these balls by $\mathcal{O}((c_nn^{1/d})^{-d})$ balls of radius $n^{-1/d}$.
	
	\subsubsection*{Proof of \eqref{conv2}}
	
	In Equation \eqref{t5i12}, let us choose $w_n=\log(n)+c\log(\log(n))$ with  $c<(d-1)$ and introduce $N_k=\lceil\exp(\sqrt{k})\rceil$, 
	like in equation (3.12) in  \cite{jan87}, we obtain for $k$ large enough, $\pr(N_kV(\aleph_{N_k})\leq w_{N_k})\leq \exp(-\alpha_A k^{\frac{d-1-c}{2}}/2)$ and, Borel-Cantelli Lemma 
	implies that, with probability one, for $k$ large enough $N_k V(\aleph_{N_k})\leq w_{N_k}$. The rest of the proof is exactly the same as in Lemma 5 in \cite{jan87}

	\subsubsection*{Proof of \eqref{conv3}:}
	For any $u>0$ let us introduce the sequence $r_n=\left(\frac{\log n +(d+1+u)\log(\log(n))}{n}\right)^{1/d}$ and $\eps_n=\frac{1}{\log(n)\log(\log(n))}$.
	Cover $S$ with $\nu_n\leq C_S \eps_n^{-d}r_n^{-d}$ balls of radius $\eps_n r_n$. Reasoning like in \cite{jan87} we get
	$$\pr\left( \frac{nV(\aleph_n)-\log(n)}{\log(\log (n))}\geq d+1+u\right)=\pr(\Delta_n\geq r_n)\leq \nu_n \left(1-r_n^d(1-\eps_n)^d(1-2K_fr_n\diam(A)) \right)^n,$$

	that implies,
	$$\pr\left( \frac{nV(\aleph_n)-\log(n)}{\log(\log (n))}\geq d+1+u\right)\leq C_S \frac{(\log(\log(n)))^d}{(\log(n))^{2+u+o(1)}}.$$
	
	From Borel-Cantelli Lemma, taking $N_k=\lceil\exp(\sqrt{k})\rceil$ it follows that, with probability one, for $k$ large enough
	$\frac{N_k V(\aleph_{N_k})-\log(N_k)}{\log(\log (N_k))}\leq d+1+u$. Now take $n\geq N_k$ and $n\leq N_{k+1}$ and suppose that
	$\frac{N_k V(\aleph_{N_k})-\log(N_k)}{\log(\log (N_k))}\leq d+1+u$. We have, 
	$$nV(\aleph_n)\leq \frac{N_{k+1}}{N_k}V(\aleph_{N_k})\leq \exp \left(\frac{1}{2\sqrt{k}}(1+o(1))\right)(\log(N_k)+(d+1+u)\log(\log(N_k))),$$ 
	that entails,
	$$nV(\aleph_n)\leq  \left(1+\frac{1}{2\log(n)}(1+o(1)))\right)(\log(n)+(d+1+u)\log(\log(n))).$$ 
	Finally for $n$ large enough,
	$$nV(\aleph_n)\leq  \log(n)+(d+1+u)\log(\log(n))+1,$$
	so that $$\frac{nV(\aleph_n)-\log(n)}{\log(\log(n))}\leq d+1+u +\frac{1}{\log(\log n)}, $$ 
	which concludes the proof of \eqref{conv3}.

\section{Appendix B}
\subsection{Proof of Theorem \ref{testconvteo}}

The proof make use of the following two propositions. The first one gives conditions under which the maximal-spacing of two compact sets are close. 
The second one shows that if the set  $S$ is not convex, then $\mathcal{R}(\mathcal{H}(S)\setminus S)>0$.

\begin{proposition} \label{prop1} Let $A$ and $B$ be bounded and non-empty subsets of $\mathbb{R}^d$. 
	If 
	$d_H(A,B)\leq \eps$ and $d_H(\partial A, \partial B)\leq \eps$. Then  
	$\big|\mathcal{R}(A)-\mathcal{R}(B)\big|\leq  2\eps.$

\begin{proof}
	First we introduce $A'=\big\{ x\in A: d(x,\partial A)>2\eps\big\}$ 
	and prove that $A'\subset B$ by contradiction.
	Suppose that there exists $x\in A$ such that $d(x,\partial A)>2\eps$ 
	and $x\notin B$. Since $d_H(A,B)\leq \eps$ we have $A\subset B^\varepsilon$, then $x\in B^\varepsilon \setminus B$, 
	so $d(x,\partial B)\leq \eps$. Now, as $d_H(\partial A, \partial B)\leq \eps$, by the triangular inequality,
	$d(x,\partial A)\leq 2\eps.$ which is a contradiction.
	From $A'\subset B$ it follows that 
	$\mathcal{R}(A')\leq \mathcal{R}(B)$.
	Now for all $r<\mathcal{R}(A)$ there exist $x\in A$ such that $\mathcal{B}(x,r)\subset A$ so that $\mathcal{B}(x,r-2\eps)\subset A'$
	which entails $\mathcal{R}(A')\geq \mathcal{R}(A)-2\eps$ and, finally, $\mathcal{R}(B)\geq \mathcal{R}(A)-2\eps$.
	Proceeding in the same way, we get  $\mathcal{R}(A)\geq \mathcal{R}(B)-2\eps$ that conclude the proof.
\end{proof}
\end{proposition}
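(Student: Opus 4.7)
By symmetry it is enough to show $\mathcal{R}(B) \geq \mathcal{R}(A) - 2\eps$; the reverse inequality will follow by swapping the roles of $A$ and $B$. The plan is to shrink any ball inscribed in $A$ by $2\eps$ and argue that the shrunken ball must lie inside $B$.

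First I would introduce the ``$2\eps$-core'' $A_* = \{x \in A : d(x,\partial A) > 2\eps\}$ and observe that whenever $\mathcal{B}(x,r) \subset A$ with $r > 2\eps$, the concentric ball $\mathcal{B}(x, r-2\eps)$ is entirely contained in $A_*$: for any $z$ with $\|z - x\| \leq r - 2\eps$, the triangle inequality gives $\mathcal{B}(z, 2\eps) \subset \mathcal{B}(x, r) \subset A$, hence $d(z, \partial A) \geq 2\eps$. Taking $r$ arbitrarily close to $\mathcal{R}(A)$ this yields $\mathcal{R}(A_*) \geq \mathcal{R}(A) - 2\eps$.

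Next I would show $A_* \subset B$ by contradiction. Assume some $x \in A_*$ satisfies $x \notin B$. Since $d_H(A,B) \leq \eps$ and $x \in A$, there exists $b \in B$ with $\|x - b\| \leq \eps$. The continuous map $t \mapsto d((1-t)x + tb, B)$ on $[0,1]$ is positive (or zero) at $t=0$ and vanishes at $t=1$, so a short case analysis (depending on whether $x$ itself lies in $\overline{B}$) produces a point $p \in \partial B$ on the segment $[x,b]$ with $\|x-p\| \leq \eps$. Thus $d(x,\partial B) \leq \eps$, and the second Hausdorff hypothesis $d_H(\partial A, \partial B) \leq \eps$ together with the triangle inequality gives $d(x, \partial A) \leq 2\eps$, contradicting $x \in A_*$. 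Therefore $A_* \subset B$, so $\mathcal{R}(B) \geq \mathcal{R}(A_*) \geq \mathcal{R}(A) - 2\eps$.

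The only delicate point is the production of the boundary point $p$ of $B$: because $A$ and $B$ are only assumed to be bounded (not closed or regular in any sense), one cannot simply take ``the point where the segment crosses $\partial B$''. The continuity argument above handles this, and is really where the hypothesis $d_H(\partial A, \partial B) \leq \eps$ gets used---without it, pathological oscillations of $\partial B$ inside $A$ could shatter an inscribed ball of $A$ without changing $d_H(A,B)$, and the conclusion would fail.
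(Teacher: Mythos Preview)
Your proof is correct and follows essentially the same route as the paper: introduce the $2\eps$-core $A_*$ (the paper calls it $A'$), show $A_*\subset B$ by deriving $d(x,\partial B)\le\eps$ from $x\in B^{\eps}\setminus B$ and then invoking $d_H(\partial A,\partial B)\le\eps$, and conclude via $\mathcal{R}(A_*)\ge\mathcal{R}(A)-2\eps$. The only difference is cosmetic: the paper states the implication ``$x\in B^{\eps}\setminus B\Rightarrow d(x,\partial B)\le\eps$'' directly, while you unpack it with a segment/continuity argument (which, incidentally, needs the usual $\eps+\delta$ tweak since the infimum $d(x,B)\le\eps$ need not be attained).
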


\begin{proposition} \label{prop2} Let $S\subset \mathbb{R}^{d}$ be a non--convex, closed 
	set with non-empty interior. Then,  $\mathcal{R}\big(\mathcal{H}(S)\setminus S\big)>0.$

\begin{proof}
	Since $S$ is closed and non-convex there exists $x\in \mathcal{H}(S)\setminus S$ with $d(x,S)=r>0$.
	By Corollary 7.1 in \cite{gallier} we know 
	that $\mathcal{H}(S)=\overline{\mathring{\mathcal{H}(S)}}$ so that,
	for all $\eps>0$, there exists $x_{\eps}$ and $\nu_{\eps}>0$ such that $|x_{\eps}-x|\leq \eps$ and
	$\mathcal{B}(x_{\eps},\nu_{\eps})\subset \mathcal{H}(S)$. Taking $\eps=r/2$ and $\rho=\min(\nu_{r/2},r/2)>0$
	we conclude that $\mathcal{R}\big(\mathcal{H}(S)\setminus S\big) \geq \rho>0$.\\
\end{proof}
\end{proposition}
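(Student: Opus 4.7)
The plan is to exploit the fact that non-convexity forces a strictly positive gap between $S$ and its convex hull, together with the density of interior points in the convex hull. First, I would observe that since $S$ is closed and not convex, the set $\mathcal{H}(S)\setminus S$ is non-empty, and I can pick a point $x$ in it whose distance $r=d(x,S)$ to $S$ is strictly positive (here closedness of $S$ is essential). The naive hope would be to fit a ball around $x$ of radius comparable to $r$ and be done; this is the candidate ``inner ball'' for $\mathcal{R}(\mathcal{H}(S)\setminus S)$.

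The main obstacle I anticipate is that the chosen $x$ could well sit on $\partial \mathcal{H}(S)$, so that no neighborhood of $x$ lies inside $\mathcal{H}(S)$, and the naive ball would stick out of the convex hull. To circumvent this, I would invoke the classical fact that a convex set with non-empty interior coincides with the closure of its interior. Since $S$ has non-empty interior and $S\subset\mathcal{H}(S)$, the convex set $\mathcal{H}(S)$ has non-empty interior, and hence $\mathcal{H}(S)=\overline{\mathring{\mathcal{H}(S)}}$. Therefore for every $\eps>0$ I can find $x_\eps$ with $\|x_\eps-x\|\le\eps$ and some $\nu_\eps>0$ with $\mathcal{B}(x_\eps,\nu_\eps)\subset\mathcal{H}(S)$.

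Finally, I would tune the parameters: choose $\eps=r/2$ so that $d(x_\eps,S)\ge r/2$ by the triangle inequality, and set $\rho=\min(\nu_{r/2},r/2)>0$. Then $\mathcal{B}(x_\eps,\rho)$ lies inside $\mathcal{H}(S)$ (by the interior-ball property) and is disjoint from $S$ (by the distance bound), so $\mathcal{B}(x_\eps,\rho)\subset\mathcal{H}(S)\setminus S$, which yields $\mathcal{R}(\mathcal{H}(S)\setminus S)\ge\rho>0$. The non-trivial ingredient is the convex-analysis fact $\mathcal{H}(S)=\overline{\mathring{\mathcal{H}(S)}}$; everything else is a short triangle-inequality argument.
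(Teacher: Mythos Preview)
Your proposal is correct and follows essentially the same argument as the paper's own proof: pick $x\in\mathcal{H}(S)\setminus S$ at positive distance $r$ from $S$, use the convex-analysis fact $\mathcal{H}(S)=\overline{\mathring{\mathcal{H}(S)}}$ to find a nearby interior point $x_{r/2}$ with a small ball $\mathcal{B}(x_{r/2},\nu_{r/2})\subset\mathcal{H}(S)$, and take $\rho=\min(\nu_{r/2},r/2)$. If anything, your write-up is slightly more explicit than the paper's (you spell out why $\mathcal{H}(S)$ has non-empty interior and why the ball avoids $S$ via the triangle inequality), but the strategy and the choice of parameters are identical.
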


Now we can prove Theorem \ref{testconvteo}.\\

First observe that, if $S$ is convex $ \mathcal{R}\big(\mathcal{H}(\aleph_n)\setminus \aleph_n\big)\leq \mathcal{R}\big( S\setminus \aleph_n\big) $
and $|\mathcal{H}(\aleph_n)|\leq |S| $ so that  $\tilde{V_n}\leq V(\aleph_n)$. Then, from  Corollary \ref{jan2}
we obtain that $\mathbb{P}\big(\tilde{V_n}>c_{n,\gamma}\big)\leq \gamma +o(1)$, and  the 
test is asymptotically of level smaller than $\gamma$.\\

Now we prove that if $S\in \mathcal{C}_P$, then $\pr_{H_0}\big(\tilde{V}_n > c_{n,\gamma} \big)\rightarrow \gamma$.
Recall that,  if $S\subset \mathbb{R}^d$ is convex and $\aleph_n=\{X_1,\dots,X_n\}$ is an iid random sample,
uniformly drawn on $S \in \mathcal{C}_P$, in \cite{walther:97} it is proved that, almost surely: 

\begin{equation} \label{dhchull}
d_H\big(\mathcal{H}(\aleph_n), S\big)=\mathcal{O}\Big(\big(\log(n)/n\big)^{2/(d+1)}\Big) \text{ and } 
d_H\big(\partial \mathcal{H}(\aleph_n),\partial S\big)=\mathcal{O}\Big(\big(\log(n)/n\big)^{2/(d+1)}\Big).
\end{equation}

Thus, by Proposition \ref{prop1}, we have that
$\Big|\mathcal{R}\big(\mathcal{H}(\aleph_n)\setminus \aleph_n\big)-\mathcal{R}\big(S\setminus \aleph_n\big)\Big|=\mathcal{O}\Big(\big(\log(n)/n\big)^{2/(d+1)}\Big)$
almost surely. Therefore
$$\tilde{\Delta}_n(\aleph_n)=\frac{|\mathcal{H}(\aleph_n)|^{1/d}}{|S|^{1/d}}\left(\Delta(\aleph_n)+\mathcal{O}\Big(\big(\log(n)/n\big)^{2/(d+1)}\Big)\right)\text{ a.s.}$$

The second equation in \eqref{dhchull} also provides that $|\mathcal{H}(\aleph_n)|=|S|+\mathcal{O}\Big(\big(\log(n)/n\big)^{2/(d+1)}\Big)$ almost surely.
Finally we have,
\begin{equation*}
\tilde{V}_n=V(\aleph_n)\left(1+\mathcal{O} \Big(\big(\log(n)/n\big)^{2/(d+1)}\Big)\right)\left(1+\mathcal{O} \left( \frac{\big(\log(n)/n\big)^{2/(d+1)}}{\Delta(\aleph_n)}\right)\right)^d \text{ a.s.}
\end{equation*}

Observe that from Corollary \ref{jan2} $ii)$ and $iii)$ $\Delta(\aleph_n)=\left( \log(n)/n \right)^{1/d}(1+o(1))$ almost surely, then 
\begin{equation*}
\tilde{V}_n=V(\aleph_n)+\mathcal{O} \Big(\big(\log(n)/n\big)^{1+\frac{d-1}{d(d+1)}}\Big) \text{ a.s.}
\end{equation*}

This, together with $c_{n,\gamma}=\mathcal{O}(\log(n)/n)$ entails that $\pr(\tilde{V}_n\geq c_{n,\gamma})\rightarrow \gamma$, as desired.
\\

To conclude the proof of the theorem consider now that $S$ is not convex. First we prove that, if $\varepsilon_n=d_H(S,\aleph_n)$, then $d_H(\mathcal{H}(S),\mathcal{H}(\aleph_n))\leq 2\eps_n$. Indeed, for all $x\in \mathcal{H}(S)$ there exist $x_1 \in S$, $x_2\in S$
and $\lambda\in[0,1]$ such that $x=\lambda x_1+(1-\lambda)x_2$. Since  $\varepsilon_n=d_H(S,\aleph_n)$ there exist $X_i\in \aleph_n$ and $X_j\in \aleph_n$ such that
$\|x_1-X_i\|\leq \eps_n$ and $\|x_2-X_j\|\leq \eps_n$ so that $y=\lambda X_i+(1-\lambda)X_j$ belongs to 
$\mathcal{H}(\aleph_n)$. By the triangular inequality $\|x-y\|\leq 2\eps_n$. Since $\mathcal{H}(\aleph_n)\subset \mathcal{H}(S)$ 
we also have that  $d_H(\partial \mathcal{H}(S),\partial \mathcal{H}(\aleph_n))\leq 2\eps_n$,
which implies that  $\|\mathcal{H}(\aleph_n)|-|\mathcal{H}(S)\|\leq \mathcal{O}(\eps_n)$. On 
the other hand we have 
$d_H(\mathcal{H}(S)\setminus \aleph_n,\mathcal{H}(\aleph_n)\setminus \aleph_n)\leq 2\eps_n $
and 
$d_H(\partial (\mathcal{H}(S)\setminus \aleph_n),\partial (\mathcal{H}(\aleph_n)\setminus \aleph_n))\leq 2\eps_n$. By 
Proposition \ref{prop1} we get,
$\mathcal{R}(\mathcal{H}(\aleph_n)\setminus \aleph_n)\geq \mathcal{R} (\mathcal{H}(S)\setminus \aleph_n)-2\eps_n$. Since 
$\mathcal{H}(S)\setminus S\subset \mathcal{H}(S)\setminus \aleph_n$ it follows,
\begin{equation}\label{minonotconv}
\mathcal{R}(\mathcal{H}(\aleph_n)\setminus \aleph_n)\geq \mathcal{R} (\mathcal{H}(S)\setminus S)-2\eps_n.
\end{equation}

Since $\eps_n\rightarrow 0$ almost surely, 
$|\mathcal{H}(\aleph_n)|\stackrel{a.s.}{\rightarrow}|\mathcal{H}(S)|$ and
$\mathcal{R}(\mathcal{H}(\aleph_n)\setminus\aleph_n)\geq \mathcal{R}(\mathcal{H}(S)\setminus S)$ eventually almost surely.
Then, there exists $C_S$ a positive constant that depends on $S$ such that,
$\tilde{V}_n\geq C_S$ eventually almost surely.
Finally, since $c_{n,\gamma}\rightarrow 0$ we conclude the proof.

\subsection{Proof of Theorem \ref{mainnonconvtheo}}
The proof of Theorem \ref{mainnonconvtheo} is based on the following lemma

\begin{lemma}\label{newdens} Assume that the unknown density $f$ fulfils condition B and 
	$S\in\mathcal{A}$. Take $K\in\mathcal{K}$, and $h_n=\mathcal{O}(n^{-\beta})$ with $\beta\in(0,1/d)$.
	
	\noindent Let $\hat{f}_n(x)$ be the density estimator introduced in Definition \ref{defestimdens}. Then, 
	
	\begin{enumerate}
		\item[(i)] there exists a sequence $\eps^{+}_n$ such that $\log(n)\eps^{+}_n\rightarrow 0$ and for all $x\in S$, 
		$\left(\frac{f(x)}{\hat{f}_n(x)}\right)^{1/d}\geq 1-\eps^+_n$  e.a.s. 
		\item[(ii)] there exist a sequence $\eps^{-}_n\rightarrow 0$ and a constant $\lambda_0>0$ such that for all
		$x\in\mathcal{H}(\aleph_n)$, $(\hat{f}_n(x))^{1/d}\geq \lambda_0 -\eps^-_n.$  e.a.s. 
	\end{enumerate}

\begin{proof}

	We start the proof by establishing some useful preliminary results.
	First notice that, for $S\in\mathcal{A}$, with exactly the same kind of calculation we did to prove Lemma \ref{lem01}, choosing  $\rho_n=\left(\frac{4 \log n}{f_0c_S\omega_d n} \right)^{1/d}$
	we have,
	\begin{equation}\label{bounddh}
	\pr(d_H(\aleph_n,S)\geq \rho_n)\leq C_S n^{-2} \quad \text{for $n$ large enough}.
	\end{equation}
Notice that, since $K\in \mathcal{K}$, $S\in \mathcal{A}$, and $K$ is bounded from below on a neighbourhood of the origin, there exist $c_K''>0$
		and $r_K>0$ such that,
	\begin{equation}\label{standK}
	\int_{S} K((u-x)/r)du \geq  c''_K r^d  \quad  \text{for all } x\in S \text{ and } r\leq r_K'.
	\end{equation}
	
	We have, for all $x\in S$,
	$$\E f_n(x)=\int_{\{u:x+uh_n\in S\}}K(u)f(x+uh_n)du.$$
	Using that $f$ is Lipschitz and $\int_{\mathbb{R}^d} K(u)du=1$, we get, for all $x\in S$
	\begin{equation}\label{majdens}
	\E f_n(x)\leq  \int_{\{u:x+uh_n\in S\}}K(u)\big(f(x)+k_f\|u\|h_n\big)du\leq f(x)+k_fh_nc_K.
	\end{equation}
From \eqref{standK} and the condition $f(x)>f_0$ for all $x\in S$, it follows that,
	\begin{equation}\label{mindens}
	\E f_n(x)\geq f_0c'_K \quad \text{ for all } x\in S.
	\end{equation}
	
	We start by proving $(i)$. The triangular inequality entails that,
	\begin{equation} \label{ineqlem}
	\max_{x\in S}\big(\hat{f}_n(x)-f(x)\big)\leq \sup_{x\in S}\big|\hat{f}_n(x)-\E\hat{f}_n(x)\big|+\sup_{x\in S}\big(\E\hat{f}_n(x)-f(x)\big).
	\end{equation}
In order to deal with the first term on the right hand side of \eqref{ineqlem}, observe that, 
		since $K\in\mathcal{K}$ and $h_n=\mathcal{O}(n^{-\beta})$ with $\beta\in(0,1/d)$
		we can apply Theorem 2.3 in \cite{gg02}.
		Then, there exists a constant $C_1$ such that, with probability one, for $n$ large enough,
		$$\sqrt{\frac{nh_n^d}{-\log(h_n)}}\sup_{x\in \mathbb{R}^d}\big|f_n(x)-\E f_n(x)\big|\leq C_1.$$

		Thus
		$$\sqrt{\frac{nh_n^d}{-\log(h_n)}}\sup_{x\in \aleph_n} \big|f_n(x)-\E f_n(x)\big|\leq C_1,$$
		and therefore, 
		\begin{equation}\label{ineqnonunif1}
		\sqrt{\frac{nh_n^d}{-\log(h_n)}}\sup_{x\in S} \big|\hat{f}_n(x)-\E\hat{f}_n(x)\big|\leq C_1.
		\end{equation}

	Now let us bound $\sup_{x\in S} (\E \hat{f}_n(x)-f(x))$ from above. 
	For all $x\in S$ we have, 
	\begin{multline} \label{auxeqlem1}
	\E(\hat{f}_n(x))= \E(\hat{f}_n(x)|d_H(\aleph_n,S)\leq \rho_n)\pr(d_H(\aleph_n,S)\leq \rho_n)+\\
	\E(\hat{f}_n(x)|d_H(\aleph_n,S)> \rho_n)\pr(d_H(\aleph_n,S)> \rho_n).
	\end{multline}
	
	Since $\{(x,y)\in S^2,\|x-y\|\leq h_n\}$ is compact,
	the Lebesgue dominate convergence theorem entails that there exist $y_0\in S$ such that $\|x-y_0\|\leq \rho_n$, and a sequence $y_k$ with $y_k\rightarrow y_0$, $\|y_k-y_0\|\leq\rho_n$,  such that
	for $n$ large enough, with probability one
	
	\begin{multline*}
	\E(\hat{f}_n(x)| d_H(S,\aleph_n)\leq \rho_n)\leq  \sup_{x\in S}\E\Big(\limsup_{y\in S:\|x-y\|\leq \rho_n}f_n(y)\Big)=\\
	\sup_{x\in S}\E\Big(\lim_{y_k\rightarrow y_0} f_n(y_k)\Big)=\sup_{x\in S}\lim_{y_k\rightarrow y_0} (\E(f_n(y_k)))\leq\sup_{x\in S} \sup_{y\in S:\|x-y\|\leq\rho_n} \E(f_n(y)).
	\end{multline*}
	Now applying \eqref{majdens} and the Lipschitz continuity of $f$ we obtain,
	\begin{multline}\label{majo51}
	\E(\hat{f}_n(x)| d_H(S,\aleph_n)\leq \rho_n)\leq \max_{y\in S,\|x-y\|\leq \rho_n} \{f(y)+k_f h_nc_K \}
	\leq f(x)+k_f \rho_n +k_f h_nc_K.
	\end{multline}
	
	With the same kind of argument it can be proved that,
	
	\begin{equation}\label{majo52}
	\E(\hat{f}_n(x)|d_H(\aleph_n,S)\geq \rho_n)\leq \sup_{y\in S} \E f_n(y) \leq f_1+k_f h_nc_K.
	\end{equation}
	
	From equations \eqref{auxeqlem1},\eqref{majo51},\eqref{majo52} and \eqref{bounddh} we get,
	\begin{equation}\label{finish}
	\sup_{x\in S}(\E(\hat{f}_n(x))-f(x))\leq k_f \rho_n +k_f h_nc_K +(f_1+k_f h_nc_K)C_Sn^{-2}.
	\end{equation}

	Take now $\eps_n=k_f \rho_n +k_f h_nc_K + (f_1+k_f h_nc_K)C_Sn^{-2}+C_1\left(\frac{nh_n^d}{-\log(h_n)}\right)^{1/2}$. Which fulfils  $\log(n)\eps_n\rightarrow 0$.
	From equations \eqref{ineqlem}, (\ref{ineqnonunif1}), (\ref{finish}), we obtain that, with probability one, for $n$ large enough, 
	$$\max_{x\in S}\big(\hat{f}_n(x)-f(x)\big)\leq \eps_n.$$

	Then, for all $x\in S$, $\hat{f}_n(x)-f(x)\leq f(x)\eps_n/f_0$, 
	and thus, $\frac{\hat{f}_n(x)}{f(x)}\leq 1+\frac{\eps_n}{f_0}$,
	or equivalently, $$\left(\frac{f(x)}{\hat{f}_n(x)}\right)^{1/d}\geq \left(1+\frac{\eps_n}{f_0}\right)^{-1/d} .$$
	Finally,  taking  $\eps_n^+=(1-(1+\eps_n/f_0)^{-1/d})\sim \eps_n/(df_0)$ (observe that we have $\eps_n^+\log(n)\rightarrow 0$)
	then $\max_{x\in S}\left(\frac{f(x)}{\hat{f}_n(x)}\right)^{1/d}\geq 1-\eps_n^+$ eventually almost surely, which concludes the proof of $(i)$.\\

	In order to prove $(ii)$, observe that
	
	$$\min_{x\in \mathbb{R}^d}\hat{f}_n(x)\geq \min_{x\in \mathbb{R}^d}\E \hat{f}_n(x) -\max_{x\in \mathbb{R}^d} \big|\E \hat{f}_n(x)-\hat{f}_n(x)\big|.$$
	Since we have already proved that $\max_{x\in \mathbb{R}^d} \big|\E \hat{f}_n(x)-\hat{f}_n(x)\big|\rightarrow 0 \text{ a.s.}$, it remains to prove that $\min_{x\in \mathbb{R}^d}\E \hat{f}_n(x)$ is bounded from below by a positive constant.
	From $\min_{x\in \mathbb{R}^d}\E \hat{f}_n(x)=\min_{x\in\aleph_n}\E f_n(x)$ and \eqref{mindens}, we get
	$$\min_{x\in \mathbb{R}^d}\E \hat{f}_n(x)\geq \min_{x\in S}\E f_n(x)\geq f_0c'_K.$$
	\end{proof}
	
\end{lemma}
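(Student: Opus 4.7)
My plan is to control $\hat f_n$ via a uniform bias--variance bound augmented by a nearest-neighbour coupling. Writing $g(y):=\E f_n(y)$ and letting $X_{i(x)}$ denote the nearest sample point to $x$, one has $\hat f_n(x)=f_n(X_{i(x)})$ (off the Voronoi skeleton), so
$$\bigl|\hat f_n(x) - g(X_{i(x)})\bigr|\leq \sup_{y\in\mathbb R^d}\bigl|f_n(y)-g(y)\bigr|.$$
For the right-hand side, the hypotheses $K\in\mathcal K$ and $h_n=\mathcal O(n^{-\beta})$ with $\beta<1/d$ let me invoke Theorem~2.3 of \cite{gg02}, which furnishes a constant $C_1$ such that eventually almost surely the supremum is bounded by $C_1\sqrt{-\log(h_n)/(nh_n^d)}$; multiplied by $\log(n)$ this tends to zero precisely because $\beta d<1$.

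For part~(i), the remaining ingredient is a pointwise upper bound on $g(X_{i(x)})$. A routine kernel calculation using Lipschitz continuity of $f$ and $\int K=1$ gives $g(y)\leq f(y)+k_f c_K h_n$ uniformly in $y\in S$. To transfer this to a bound involving $f(x)$, I would invoke Lemma~\ref{lem01} (whose proof adapts to yield $\pr(d_H(\aleph_n,S)>\rho_n)=\mathcal O(n^{-2})$ for $\rho_n=(4\log(n)/(f_0 c_S\omega_d n))^{1/d}$); by Borel--Cantelli the nearest neighbour $X_{i(x)}$ then satisfies $\|X_{i(x)}-x\|\leq \rho_n$ eventually almost surely, uniformly in $x\in S$. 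Combining with Lipschitz continuity of $f$ gives $g(X_{i(x)})\leq f(x)+k_f\rho_n+k_f c_K h_n$, hence $\hat f_n(x)\leq f(x)+\eps_n$ with $\eps_n$ the sum of the three principal rates, each satisfying $\log(n)\cdot(\cdot)\to 0$. Dividing by $f(x)\geq f_0$ and raising to the $1/d$ power then converts this additive bound into the multiplicative form claimed, with $\eps_n^+ = 1-(1+\eps_n/f_0)^{-1/d}$.

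Part~(ii) is symmetric: $\hat f_n(x)\geq g(X_{i(x)}) - \sup_y|f_n(y)-g(y)|$, and for $y\in S$ and $h_n$ small enough,
$$g(y)\geq h_n^{-d}\int_{S\cap\mathcal B(y,h_n r_K)} c_K' f(u)\,du \geq f_0 c_K' c_S\omega_d r_K^d =: \lambda_0^d$$
by the kernel lower bound $K\geq c_K'$ on $\mathcal B(0,r_K)$, the density lower bound $f\geq f_0$, and the standardness of $S$. Since $X_{i(x)}\in\aleph_n\subset S$ for every $x\in\mathcal H(\aleph_n)$, this yields $\hat f_n(x)^{1/d}\geq\lambda_0-\eps_n^-$ with $\eps_n^-\to 0$. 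The main obstacle is the bias control in~(i): $\hat f_n$ is evaluated at a \emph{random} sample point rather than at $x$ itself, so boundary truncation effects could spoil $g(X_{i(x)})\leq f(x)+o(1/\log n)$. Routing the argument through the Hausdorff event of Lemma~\ref{lem01} (whose complement is $\mathcal O(n^{-2})$, hence summable) and verifying that $\log(n)\rho_n$, $\log(n)h_n$, and $\log(n)\sqrt{-\log(h_n)/(nh_n^d)}$ all tend to zero under $\beta<1/d$ is the technical heart.
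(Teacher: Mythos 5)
Your proposal is correct and follows essentially the same route as the paper: the same decomposition into the Gin\'e--Guillou uniform deviation term, the Lipschitz kernel bias bound $\E f_n(y)\leq f(y)+k_fc_Kh_n$, and the Hausdorff-proximity rate $\rho_n$ obtained from the Lemma~\ref{lem01}-type computation, with part (ii) handled via the kernel lower bound plus standardness of $S$. The only (harmless, if anything slightly cleaner) difference is that you bound $\hat f_n(x)\leq \E f_n(X_{i(x)})+\sup_y|f_n(y)-\E f_n(y)|$ pathwise on the eventually-almost-sure event $\{d_H(\aleph_n,S)\leq\rho_n\}$, whereas the paper bounds $\E\hat f_n(x)$ by conditioning on that event, which is why its $\eps_n$ carries an extra $\mathcal{O}(n^{-2})$ term.
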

Now we are ready to prove Theorem \ref{mainnonconvtheo} .
\subsection*{Proof of Theorem \ref{mainnonconvtheo} a)}
Recall that 
$$\hat{\delta}\big(\mathcal{H}(\aleph_n)\setminus \aleph_n\big)=\sup \bigg\{r: \ \exists x \text{ such that } x+\frac{r}{\hat{f}_n(x)^{1/d}}A\subset \mathcal{H}(\aleph_n)\setminus \aleph_n \bigg\},$$ with $A$ the ball $\mathcal{B}(O,\omega_d^{-1/d})$.

Under $H_0$ ($S$ is convex),

$$\hat{\delta}\big(\mathcal{H}(\aleph_n)\setminus \aleph_n\big)\leq \sup \bigg\{r: \ \exists x \text{ such that } x+\frac{r}{\hat{f}_n(x)^{1/d}}A\subset S\setminus \aleph_n \bigg\}.$$

If we apply Lemma \ref{newdens} (i), (notice that all convex sets are in $\mathcal{A}$)  we get,

$$\hat{\delta}\big(\mathcal{H}(\aleph_n)\setminus \aleph_n\big)\leq \sup \bigg\{r: \ \exists x \text{ such that } x+\frac{r}{f(x)^{1/d}}(1-\eps_n^+)A\subset S\setminus \aleph_n \bigg\}.$$

Equivalently, $\Delta(\aleph_n)\geq (1-\eps_n^+)\hat{\delta}\big(\mathcal{H}(\aleph_n)\setminus \aleph_n\big),$ and therefore
$\pr(\hat{V}_n\geq c_{n,\gamma})\leq \pr\big(V(\aleph_n) \geq (1-\eps_n^+)^dc_{n,\gamma}\big),$
from where it follows that $\pr(\hat{V}_n\geq c_{n,\gamma})$ can be majorized by,
$$ \pr\Big(U(\aleph_n) \geq -(1-\eps_n^+)^d\log\big(-\log(1-\gamma)\big)+\big((1-\eps_n^+)^d-1\big)\big(\log(n)+(d-1)\log(\log(n))+
\log(\alpha_{\mathcal{B}})\big)\Big).$$

Therefore,  by Theorem \ref{theocont}, (using that $\log(n)\eps_n^+\rightarrow 0$)  we get that,

$$\pr(\hat{V}_n\geq c_{n,\gamma})\leq \pr\big(U(\aleph_n) \geq -\log(-\log(1-\gamma))+o(1)\big)\rightarrow \gamma . $$

\subsection*{Proof of Theorem \ref{mainnonconvtheo} b)}

From Lemma \ref{newdens} (ii), we have that
$$\hat{\delta}\big(\mathcal{H}(\aleph_n)\setminus \aleph_n\big)\geq (\lambda_0 -\eps^-_n)
\mathcal{R}\big(\mathcal{H}(\aleph_n)\setminus \aleph_n\big),$$
where $\eps_n^-\rightarrow 0$ a.s.
Then, under $H_1$ ($S$ is not convex), from \eqref{minonotconv}, we obtain
$$\hat{\delta}\big(\mathcal{H}(\aleph_n)\setminus \aleph_n\big)\geq (\lambda_0 -\eps^-_n)\big(\mathcal{R}\big(\mathcal{H}(S)\setminus S\big)-2d_H(S,\aleph_n)\big).$$
Since $S\in \mathcal{A}$, $d_H(S,\aleph_n)\rightarrow 0$ a.s. (see \cite{crc:04}), and $\mathcal{R}\big(\mathcal{H}(S)\setminus S\big)>0$ (see Proposition \ref{prop2}) then
		with probability one, for $n$ large enough
		$$\hat{\delta}\big(\mathcal{H}(\aleph_n)\setminus \aleph_n\big)\geq \frac{1}{2}\lambda_0\mathcal{R}\big(\mathcal{H}(S)\setminus S\big),$$
		and Theorem \ref{mainnonconvtheo} b) follows from the fact that $c_{n,\gamma}\rightarrow 0$.

\end{document}